\makeatletter \@namedef{subjclassname@2010}{%
  \textup{2010} Mathematics Subject Classification}
\newcounter{thm} \numberwithin{thm}{section}
\newtheorem{Theorem}[thm]{Theorem}
\newtheorem{Lemma}[thm]{Lemma}
\newtheorem{Claim}[thm]{Claim}
\renewcommand{\emptyset}{\varnothing}
\newcommand{\thistheoremname}{}
\newtheorem*{genericthm*}{\thistheoremname}
\newenvironment{namedthm*}[1]
  {\renewcommand{\thistheoremname}{#1}%
   \begin{genericthm*}}
  {\end{genericthm*}}
\tikzset{mybrace/.style={decoration={brace,raise=1.8mm},decorate}}
\tikzset{mybracedown/.style={decoration={brace,mirror,raise=1.8mm},decorate}}
\date{}
\author[K. Bhowmick]{Krishnendu Bhowmick} \address{Johann Radon Institute for Computational and Applied Mathematics\\
Linz, Austria}
\email{Krishnendu.Bhowmick@oeaw.ac.at}
\author[B. Lund]{Ben Lund} 
\address{Institute for Basic Science (IBS)\\
Daejeon, South Korea}
\email{benlund@ibs.re.kr}
\author[O. Roche-Newton]{Oliver Roche-Newton} \address{Institute for Algebra, Johannes Kepler Universit\"{a}t\\
Linz, Austria}
\email{o.rochenewton@gmail.com}
\begin{document}

\baselineskip=17pt

\title{Large convex sets in difference sets}

\begin{abstract} 

We give a construction of a convex set $A \subset \mathbb R$ with cardinality $n$ such that $A-A$ contains a convex subset with cardinality $\Omega (n^2)$. We also consider the following variant of this problem: given a convex set $A$, what is the size of the largest matching $M \subset A \times A$ such that the set
\[
\{ a-b : (a,b) \in M \}
\]
is convex? We prove that there always exists such an $M$ with $|M| \geq \sqrt n$, and that this lower bound is best possible, up a multiplicative constant.
\end{abstract}

\date{}
\maketitle

\section{Introduction}

This paper is concerned with the existence or non-existence of convex sets in difference sets. A set $A \subset \mathbb R$ is said to be convex if its consecutive differences are strictly increasing. That is, writing $A= \{a_1 < a_2 < \dots < a_n \}$, $A$ is convex if
\[
a_{i+1} - a_i > a_i - a_{i-1}
\]
holds for all $i=2,\dots, n-1$. Most research on convex sets comes in the context of sum-product theory, and one may think of the notion of a convex set as a generalisation of a set with multiplicative structure. For instance, it is known than convex sets determine many distinct sums and differences. In particular, it was proven in \cite{SS} that the bound\footnote{Here and throughout this paper, the notation $\gg$ is used to absorb a multiplicative constant. That is $X \gg Y$ denotes that there exists an absolute constant $C>0$ such that $X \geq CY$. The notation $Y \ll X$ has the same meaning.}
\[
|A-A| \gg |A|^{8/5-o(1)}
\]
holds for any convex set $A$. Here, $A-A:=\{a-b : a,b \in A \}$ denotes the difference set determined by $A$. This results captures the vague notion that convex sets cannot be additively structured, and there has been considerable effort expended to quantify and apply this idea in various way, see for instance \cite{ENR}, \cite{HRNR} and \cite{SW}.

Given a finite set $B \subset \mathbb R$, define
\[
\mathcal C(B):= \max_{A \subset B : A \text{ is convex}} |A|. 
\]
That is, $\mathcal C(B)$ denotes the size of the largest convex subset of $B$. The first question that we consider is the following: given a convex set $A \subset \mathbb R$, what can we say about the possible value of $\mathcal C(A-A)$? A first observation is that
\begin{equation} \label{easylower}
|\mathcal C(A-A)| \geq |A|,
\end{equation}
as can be seen by considering the convex set $A-a \subset A-A$, where $a$ is an arbitrary element of $A$. There are some simple constructions showing that the lower bound \eqref{easylower} is optimal up to a multiplicative constant; for instance, we can take $A= \{ i^2 : 1 \leq i \leq n \}$. 

In this paper, we give a construction of a convex set $A$ whose difference set contains a very large convex set.

\begin{Theorem} \label{thm:main}
For all $n \in \mathbb N$, there exists a convex set $A \subset \mathbb R$ with $|A|=n$ such that
$A-A$ contains a convex subset $S$ with cardinality $|S| \gg n^2$.
\end{Theorem}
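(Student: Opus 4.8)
The plan is to construct $A$ explicitly as the image of an arithmetic progression under a carefully chosen smooth convex function, so that $A$ is convex but a large piece of $A-A$ lands on a second, slower-growing arithmetic progression. Concretely, I would look for a strictly convex function $f$ and try to arrange $A = \{ f(1), f(2), \dots, f(n)\}$ so that differences $f(i+k) - f(i)$ behave like a convex sequence in the pair $(i,k)$; the cleanest way to force exact structure is to demand that many of these differences coincide in a controlled pattern. A natural first attempt is $f$ such that $f(x+d)-f(x)$ depends in a simple (e.g. affine) way on $x$ for each fixed $d$ — which points toward quadratic-type $f$ — but since a genuine quadratic gives the small example $\{i^2\}$ already noted in the excerpt, the trick must be to perturb or to work on a cleverly chosen non-uniform set of inputs. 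So instead I would take $A = g(P)$ where $P$ is an arithmetic progression and $g$ grows \emph{faster} than quadratically (so that $A$ is very convex, with lots of ``room''), and then identify inside $A - A$ a set $S$ of $\Omega(n^2)$ differences that happen to form a convex set because they sit along the graph of a related function evaluated at consecutive integers.

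The key steps, in order, would be: (1) fix the construction — choose $g$ and $P$ (I expect $|P| = n$, so $|A| = n$ and $|A-A| \le n^2$, making $|S| \gg n^2$ essentially forcing $S$ to be a positive proportion of \emph{all} differences); (2) show $A$ is convex, i.e. that the consecutive differences of $g$ along $P$ are strictly increasing — this should be immediate from strict convexity of $g$ and monotonicity of $g'$; (3) exhibit the candidate convex subset $S \subseteq A-A$: list a family of $\gg n^2$ pairs $(a,b) \in A \times A$, sort the resulting differences, and show the sorted sequence has strictly increasing consecutive gaps; (4) verify no collisions collapse the count, so that $|S| \gg n^2$ genuinely holds. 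Step (3) is where the real design happens: one wants the multiset $\{a - b\}$, restricted to a large sub-family, to be (a dilate/translate of) something like $\{ h(j) : j \in J \}$ for a convex $h$ on an interval $J$ of length $\gg n^2$, or a union of a bounded number of such pieces that interleave convexly.

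I expect the main obstacle to be step (3)–(4) together: making the difference set contain a \emph{convex} subset of near-maximal size $\Omega(n^2)$ is delicate because convexity is a rigid second-order condition, and difference sets of structured sets tend to have many coincidences (which is good for making differences land in a small target, but bad because repeated values cannot appear in a convex set and because near-repetitions destroy strict monotonicity of gaps). The resolution I would aim for is to choose the construction so that the relevant differences are, up to an explicit monotone reparametrisation, the values of a strictly convex function at $\gg n^2$ distinct lattice points lying on a line or low-degree curve — for instance arranging that $a - b$ is a strictly convex function of a single integer parameter ranging over an interval of length comparable to $n^2$. Once such a parametrisation is in place, convexity of $S$ and the bound $|S| \gg n^2$ both follow by a direct computation with finite differences, and the only remaining check is that $A$ itself (not just $S$) is convex, which is the easy part. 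A secondary technical point is handling all $n \in \mathbb N$ rather than a sparse sequence of $n$; this should follow by padding $A$ with a few extra convex points at the top, chosen large enough not to interfere with $S$, at the cost of only a constant factor.
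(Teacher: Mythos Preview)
Your outline correctly identifies the general setup --- take $A$ as the image of $\{1,\dots,n\}$ under a strictly convex function and hunt for structure among the differences --- but it never actually produces a construction, and the mechanism you hope for (that $a-b$ can be arranged as a strictly convex function of a \emph{single} integer parameter on an interval of length $\gg n^2$, or as a bounded union of such pieces) is precisely the hard part. Your plan contains no idea for how to force this, and for a generic convex $g$ the sorted list of differences $g(i)-g(j)$ is not convex.

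The paper's construction is $a_i = i + c_1 i^2 + c_2 i^3$ with $c_1 = 75/n^2$ and $c_2 = 1/n^5$, a tiny cubic perturbation of an arithmetic progression. For each fixed $k$ the set $D_k = \{a_{i+k}-a_i : 1 \le i \le 0.99n\}$ is itself convex, of size $\sim n$, and nearly an arithmetic progression with common difference $\approx 2c_1 k$. The constants are tuned so that consecutive blocks $D_k$ and $D_{k+1}$ overlap on an interval where $D_k$ is strictly denser than $D_{k+1}$; this guarantees two consecutive elements of $D_k$ lying between two consecutive elements of $D_{k+1}$, which is exactly the hypothesis of a gluing lemma from Ruzsa--Zhelezov: if two convex sets overlap in this manner, an initial segment of one followed by a terminal segment of the other is again convex. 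Gluing $\Theta(n)$ consecutive blocks $D_k$ end-to-end, each contributing $\gg n$ elements outside its neighbours, yields a convex $S \subset A-A$ with $|S| \gg n^2$.

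Two specific points where your plan diverges from what is needed: first, the number of convex pieces is $\Theta(n)$, not $O(1)$ as you suggest, and they are glued sequentially rather than interleaved; second, the verification that the overlap pattern permits gluing (two consecutive $D_k$-elements between two consecutive $D_{k+1}$-elements, plus $\gg n$ elements of $D_k$ in the region not covered by $D_{k\pm 1}$) is a genuine calculation that depends sensitively on the choice of $c_1$ and $c_2$. A generic ``$g$ faster than quadratic'' will not have this property; indeed the paper's $g$ is barely faster than linear.
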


Using the notation introduced earlier, Theorem \ref{thm:main} states that there exists a convex set $A$ such that $\mathcal C(A-A) \gg |A|^2$. This result shares some similarities with main result of \cite{RZ}, where it was established that there exists a set $A \subset \mathbb R$ such that $A+A$ contains a convex subset with cardinality $\Omega(|A|^2)$. The main qualitative difference is that we have the additional restriction that the set $A$ is also assumed to be convex. Also, Theorem \ref{thm:main} provides a convex subset of the difference set, rather than the sum set.

 The simple construction giving rise to the lower bound \eqref{easylower} feels like something of a cheat, and so we consider a variant of this problem where we make a further restriction concerning the origin of the convex subset of a difference set. A set $M \subset A \times A$ is a \textit{matching} if the elements of $M$ are pairwise disjoint. Given a matching $M \subset A \times A$, define the restricted difference set
 \[
 A-_M A:= \{ a-b : (a,b) \in M \}.
 \]

 Define
\[
\mathcal {CM}(A):= \max_{M \subset A \times A : M \text{ is a matching and } A-_M A \text{ is convex.}} |M|. 
\]
That is, $\mathcal {CM}(A)$ denotes the size of the largest matching on $A$ which gives rise to a convex subset of $A-A$.

Now, we ask a similar question for this quantity: given a convex set $A \subset \mathbb R$, what can we say about the size of $\mathcal {CM}(A)$? In particular, how small can this quantity be? Should we expect an analogue of the bound \eqref{easylower} if we rule out this simple construction? In this paper we answer this question by giving the following two complimentary results, showing that $\mathcal {CM}(A) \geq \sqrt { |A|}$ and that this bound is optimal up to a multiplicative constant.

\begin{Theorem} \label{thm:main2}
Let $n \in \mathbb N$ be sufficiently large and suppose that $A \subset \mathbb R$ is a convex set with cardinality $n$. Then there exists a matching $M \subset A \times A$ such that $|M| \geq \sqrt n$ and $A-_M A$ is convex.
\end{Theorem}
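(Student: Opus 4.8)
\emph{Proof proposal.} The plan is to write down an explicit matching. Put $A=\{a_1<a_2<\dots<a_n\}$ and $d_i:=a_{i+1}-a_i$, so that convexity of $A$ is exactly the statement $d_1<d_2<\dots<d_{n-1}$. Set $m:=\lceil\sqrt n\rceil$. The idea is to pair the $m$ smallest elements $a_1,\dots,a_m$ with a sparse increasing subfamily $a_{j_1}<a_{j_2}<\dots<a_{j_m}$ whose consecutive index jumps $j_{k+1}-j_k$ increase; one concrete choice is $i_k:=k$ and $j_k:=m+\binom{k+1}{2}$ for $1\le k\le m$, so that $j_{k+1}-j_k=k+1$. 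The matching is then $M:=\{(a_{j_k},a_{k}):1\le k\le m\}$, and $A-_M A=\{s_1,\dots,s_m\}$ with $s_k:=a_{j_k}-a_k$.

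The first thing to check is that $M$ is a legitimate matching of size $m\ge\sqrt n$. The lower indices $i_k=k$ fill $\{1,\dots,m\}$, the $j_k$ are strictly increasing, and $j_1=m+1>m$; hence the $2m$ indices used are pairwise distinct. For the construction to live inside $\{1,\dots,n\}$ we need $j_m=m+\binom{m+1}{2}\le n$, and since $\binom{m+1}{2}\sim n/2$ this holds as soon as $n$ is large — this is the only use of the hypothesis ``$n$ sufficiently large''. (The budget being essentially $n/2$ already hints that $m$ cannot be pushed much past $\sqrt{2n}$, in line with the matching upper bound advertised in the abstract.)

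The substantive step is to verify that $S:=A-_M A$ is convex. One first notes that $s_{k+1}-s_k=(a_{j_{k+1}}-a_{j_k})-d_k>0$: indeed $a_{j_{k+1}}-a_{j_k}=\sum_{\ell=j_k}^{j_{k+1}-1}d_\ell\ge (j_{k+1}-j_k)\,d_{j_k}>d_k$, since $j_k>k$ and $d$ is increasing. Hence $S$, listed in increasing order, is precisely $s_1<s_2<\dots<s_m$, and convexity amounts to positivity of the second differences. For $2\le k\le m-1$,
\[
(s_{k+1}-s_k)-(s_k-s_{k-1})=\bigl[(a_{j_{k+1}}-a_{j_k})-(a_{j_k}-a_{j_{k-1}})\bigr]-(d_k-d_{k-1}).
\]
Expanding each $a$-difference as a sum of consecutive $d_\ell$'s and using $d_1<\dots<d_{n-1}$, the bracketed term exceeds $\bigl[(j_{k+1}-j_k)-(j_k-j_{k-1})\bigr]d_{j_k}=d_{j_k}$ (the jumps increasing by exactly $1$ in our choice), while $d_{j_k}>d_k>d_k-d_{k-1}$ because $j_k>k$ and $d_{k-1}>0$. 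So the displayed quantity is strictly positive and $S$ is convex.

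I expect the bookkeeping (disjointness, the bound $j_m\le n$) to be entirely routine; the one place that genuinely uses the hypotheses is the second-difference estimate, whose mechanism is worth isolating: because the gaps of $A$ are monotone, moving one extra index step to the right past position $j_k$ already contributes at least $d_{j_k}$, a quantity that dominates the minute second differences $d_k-d_{k-1}$ produced by the dense bottom block $a_1,\dots,a_m$. It is also worth noting why cheaper ideas are awkward: selecting a convex subset of the successive gaps $d_1,d_2,\dots$ themselves fights the matching constraint (a convex run of gaps typically occupies consecutive indices, which do not form a matching), whereas passing to the differences $a_{j_k}-a_k$ sidesteps this altogether.
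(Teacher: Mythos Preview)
Your proof is correct and follows essentially the same approach as the paper: pair the bottom $m=\lceil\sqrt n\rceil$ elements with a sparse upper block whose indices are spaced by triangular numbers, and verify convexity by expanding the differences as sums of the $d_\ell$ and exploiting their monotonicity. The only cosmetic difference is that the paper runs the lower index \emph{downward} (pairing $a_{k+1-i}$ with $a_{k+1+i(i+1)/2}$), which makes each consecutive difference $e_{i+1}-e_i$ a pure sum of positive $d$-terms, whereas your upward-running lower index produces a difference of sums; both verifications go through by the same pairing idea.
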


\begin{Theorem} \label{thm:main3}
    For all $n \in \mathbb N$, there exists a convex set $A \subset \mathbb R$ with cardinality $n$ such that, if $M \subset A \times A$ is a matching and $A-_M A$ is convex then $|M| \ll \sqrt n$.
\end{Theorem}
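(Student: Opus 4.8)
The plan is to take $A$ to be (a suitably rescaled integer version of) the set of squares, say $A = \{1^2, 2^2, \dots, n^2\}$, or perhaps a lacunary-type convex set, and to show that any matching $M$ producing a convex restricted difference set must be short. The key structural fact is that if $A - _M A = \{d_1 < d_2 < \dots < d_m\}$ is convex, then its consecutive gaps $d_{j+1} - d_j$ are strictly increasing, so the $d_j$ grow at least quadratically: $d_m - d_1 \gg m^2$. On the other hand each $d_j = a_{i_j}^2 - b_{i_j}^2$ with all the $a_{i_j}$ distinct and all the $b_{i_j}$ distinct (this is where the matching hypothesis is used), so the differences $d_j$ live in a controlled range but, more importantly, cannot be packed too tightly: I want to show that a matching of size $m$ forces two of the differences to be close together relative to where they sit, contradicting the quadratic growth of gaps once $m \gg \sqrt n$.

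The heart of the argument will be a counting/pigeonhole step. Concretely, for the squares, write $d = x^2 - y^2 = (x-y)(x+y)$. Order the matched pairs so that $d_1 < \dots < d_m$. I would split the index range into dyadic blocks according to the size of $d_j$, or according to the size of $x_j - y_j$, and argue that within a block the pairs $(x_j, y_j)$ are so constrained (because the $x$'s are distinct and the $y$'s are distinct, and $x^2 - y^2$ is increasing) that the gaps $d_{j+1} - d_j$ cannot all be as large as convexity demands. The cleanest version: since convexity gives $d_{j+1} - d_j \geq d_j - d_{j-1} + 1$ (for integer sets) and hence $d_{j+1} - d_j \geq j$, we get $\sum_{j \leq m} (d_{j+1}-d_j) \gg m^2$, so some $d_j \gg m^2$; but if all $d_j \leq n^2$ (which holds since $A - A \subseteq [-n^2, n^2]$) we already only get $m \ll n$, which is too weak. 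So the real work is to improve the trivial range bound: I must show the matched differences $d_j$ cannot reach the top of the range while still being few in number and convexly spaced. I expect to do this by a second-moment or incidence-type estimate on the representation $d = (x-y)(x+y)$, exploiting that a convex sequence of differences of squares corresponds to a convex sequence in the variable $x^2$ evaluated at $m$ distinct points $x_{i_j}$ out of $\{1, \dots, n\}$, whose second differences are bounded below — forcing the $x_{i_j}$ to be spread out and the $y_{i_j}$ to compensate in a way that runs out of room.

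The main obstacle, I expect, is precisely this range-improvement step: turning "the gaps of a convex set grow" plus "the matching uses distinct coordinates" into the sharp bound $m \ll \sqrt n$ rather than the trivial $m \ll n$. A promising route is to consider the map $j \mapsto (x_{i_j} - y_{i_j}, \ x_{i_j} + y_{i_j})$ and track how convexity of $d_j = (x_{i_j}-y_{i_j})(x_{i_j}+y_{i_j})$ forces monotonicity or near-monotonicity of one of these factors; then distinctness of the $x$'s (equivalently, distinctness of sums-plus-differences) caps the number of available factor pairs in any dyadic window at roughly $\sqrt n$, and convexity prevents spreading across too many windows. I would also keep in mind the alternative of choosing $A$ to be a generalized arithmetic-progression-free or Sidon-type convex set tailored so that the representation function of differences is bounded, which could make the counting step essentially immediate; selecting the construction to make this step clean is part of the plan. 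Finally, one checks the chosen $A$ is genuinely convex and rescales to embed in $\mathbb{R}$ with exactly $n$ elements, which is routine.
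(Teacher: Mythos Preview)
Your proposal has a genuine gap: the set of squares does not work. For $A=\{i^2:1\le i\le n\}$ there is a matching of size $\gg n$ whose restricted difference set is convex. Take
\[
M=\bigl\{\,\bigl((2i)^2,\;i^2\bigr)\;:\;i\ \text{odd},\ 1\le i\le n/2\,\bigr\}.
\]
The elements $(2i)^2$ (with $i$ odd) are squares of even integers and the elements $i^2$ are squares of odd integers, so all $2|M|$ coordinates are distinct and $M$ is a genuine matching. The differences are $3i^2$ for odd $i$, i.e.\ $3(2k-1)^2$ for $k=1,2,\dots$, whose consecutive gaps are $24k$; this is a convex set of size $\approx n/4$. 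Hence $\cCM(A)\gg n$ for the squares, and your range/pigeonhole strategy cannot possibly yield $|M|\ll\sqrt n$ for this $A$. Your fallback ``lacunary-type'' suggestion fails for the same reason: for $A=\{2^i:1\le i\le n\}$ the matching $\{(2^{2i},2^{2i-1}):1\le i\le n/2\}$ gives the convex set $\{2^{2i-1}\}$ of size $n/2$.

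The paper's construction is far more delicate and is really the content of the theorem. One takes $a_j=\sum_{\ell=0}^{j-1}(j-\ell)(2n)^{n-\ell}$, so that the blocks $D_k=\{a_{j+k}-a_j\}$ are (i) separated from one another by gaps of order $(2n)^n$ while having diameter only of order $(2n)^{n-1}$, and (ii) internally distributed like a geometric progression with tiny ratio. Property~(i) forces any convex $S\subset A-A$ to meet each $D_k$ in at most one point (with one possible exceptional $k$), and forces the index set $K(S)=\{k:|S\cap D_k|\ge 1\}$ to be weakly convex. Property~(ii), together with the matching hypothesis (which guarantees the base indices $j$ are distinct), rules out four consecutive elements of $K(S)$ lying in arithmetic progression. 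A weakly convex subset of $\{1,\dots,n\}$ whose consecutive gaps are not eventually constant has size $\ll\sqrt n$, and this gives $|M|\le |K(S)|+1\ll\sqrt n$. None of this structure is present in $\{i^2\}$ or $\{2^i\}$, where the blocks $D_k$ overlap heavily or are themselves arithmetic/geometric progressions; you would need to design $A$ from scratch with both separation and internal anti-convexity in mind.
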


\section{Proof of Theorem \ref{thm:main}}

Define
\[
a_i=i + c_1i^2 + c_2i^3,
\]
where
\[
c_1= \frac{75}{n^2} \,\,\,\, \text{  and } \,\,\,\, c_2 = \frac{1}{n^5}.
\]
Assume that $n$ is a sufficiently large multiple of $100$. This assumption is made only to simplify the notation slightly, and can be easily removed at the price of introducing some floor and ceiling functions to the calculations.
Define $A= \{a_1< \dots <a_n \}$. Observe that $A$ is convex. Indeed, the sequence $a_{i+1}-a_i$ is increasing, as can be seen by calculating that
\[
a_{i+1}-a_i= 1 +c_1(2i+1) +c_2(3i^2+3i+1).
\]

For each integer $k \in [0.009 n , 0.01 n]$ define $D_k$ to be the set of $k$th differences
\[
D_k:= \{ a_{i+k}-a_i : 1 \leq i \leq  0.99 n \} .
\]
The set $D_k$ is also convex. Indeed, let $d_i^{(k)}:= a_{i+k}-a_i$ denote the $i$th element of $D_k$. Then the sequence $d_{i+1}^{(k)}- d_{i}^{(k)} $ increases with $i$. This can be seen by observing that
\begin{equation} \label{bidef}
d_i^{(k)}= a_{i+k}-a_i = k + c_1(2ki+k^2) +c_2(3i^2k+3ik^2+k^3),
\end{equation}
and hence
\[
d_{i+1}^{(k)}- d_i^{(k)}= 2c_1k+3c_2k^2+3c_2k+6c_2ki
\]
increases with $i$.

We will find a large convex subset of $A-A$ by efficiently gluing together consecutive convex sets $D_k$. We will make use of the following observation from \cite{RZ}.

\begin{Lemma} \label{lem:RZ}
Suppose that $A= \{a_1< \dots <a_n \}$ and $B = \{b_1< \dots <b_m \}$ are convex sets. Suppose that there exist $1 \leq i \leq m$ and $1 \leq  j \leq n$ such that
\[
b_i \leq a_j < a_{j+1} \leq b_{i+1}.
\]
Then the set
\[
\{ a_1 < \dots < a_j < b_{i+1} < b_{i+2} < \dots < b_m \}
\]
is convex.
\end{Lemma}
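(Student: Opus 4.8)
The plan is to check the convexity condition of the glued sequence $C := \{a_1 < \dots < a_j < b_{i+1} < b_{i+2} < \dots < b_m\}$ directly, triple by triple. Recall that convexity is a condition on consecutive triples only. For any triple lying entirely inside the initial block $a_1,\dots,a_j$, the required inequality is inherited verbatim from the convexity of $A$; for any triple lying entirely inside the terminal block $b_{i+1},\dots,b_m$, it is inherited from the convexity of $B$. Hence the only inequalities that genuinely need to be verified are the (at most) two triples that straddle the junction, namely
\[
b_{i+1} - a_j > a_j - a_{j-1} \qquad \text{and} \qquad b_{i+2} - b_{i+1} > b_{i+1} - a_j ,
\]
where the first is vacuous if $j = 1$ and the second is vacuous if $i+1 = m$ (so that $b_{i+2}$ does not exist).

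For the first inequality I would use the right half of the hypothesis, $a_{j+1} \le b_{i+1}$, to get $b_{i+1} - a_j \ge a_{j+1} - a_j$, and then apply convexity of $A$ to the triple $a_{j-1} < a_j < a_{j+1}$ to obtain $a_{j+1} - a_j > a_j - a_{j-1}$; chaining these gives the claim. For the second inequality I would symmetrically use the left half of the hypothesis, $b_i \le a_j$, to get $b_{i+1} - a_j \le b_{i+1} - b_i$, and then apply convexity of $B$ to the triple $b_i < b_{i+1} < b_{i+2}$ to obtain $b_{i+1} - b_i < b_{i+2} - b_{i+1}$; again chaining yields the claim. One should also note in passing that the hypothesis implicitly forces $j \le n-1$ and $i \le m-1$, since $a_{j+1}$ and $b_{i+1}$ are referenced.

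There is essentially no obstacle here beyond bookkeeping the two degenerate end cases ($j=1$ and $i+1=m$); the entire content of the lemma is the observation that the sandwiching condition $b_i \le a_j < a_{j+1} \le b_{i+1}$ is exactly what is needed to control the two junction triples, with $A$-convexity handling the left junction and $B$-convexity handling the right one.
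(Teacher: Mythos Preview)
Your proof is correct. The paper does not actually supply a proof of this lemma; it is simply stated as an observation taken from \cite{RZ}, so there is nothing to compare against beyond noting that your direct triple-by-triple verification is exactly the natural argument and handles the two junction triples and the degenerate cases $j=1$, $i+1=m$ correctly.
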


Before we get into the details of the proof of Theorem \ref{thm:main}, which involves some rather tedious calculations, let us take a moment to try and explain the idea behind it, with the help of some pictures.

Firstly, we note that, although the sets $D_k$ are convex, they are only \textit{slightly convex}, in the sense that, if we zoom out and take a look at $D_k$, it appears to resemble an arithmetic progression with common difference $2c_1k$. Note also that this common difference increases slightly as $k$ increases.

 \begin{figure}[ht]

\centering

\begin{tikzpicture} [scale=3]
  \draw (11.9,0) -- (16.1,0);
  \node[below] at (12,0) {$d_1^{(10)}$};
  \node[below] at (14.00008,0) {$d_2^{(10)}$};
  \node[below] at (16.00050,0) {$d_3^{(10)}$};
  \draw[fill] (12,0) circle (0.4pt);
  \draw[fill] (14.00008,0) circle (0.4pt);
 \draw[fill] (16.00050,0) circle (0.4pt);
\end{tikzpicture}

\caption{This picture shows the first three elements of $D_{10}$ after setting $n=10000$. The three elements form a convex set, but to the naked eye they appear to be arranged in an arithmetic progression.}

\end{figure}
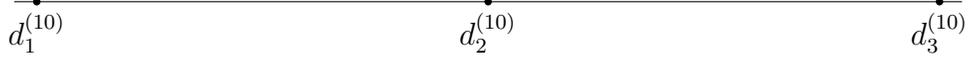

The other important feature of this construction is that we have chosen the parameters in such a way that the $D_k$ have convenient overlapping properties. In particular, each $D_k$ has diameter approximately $\frac{3}{2}$, and starts at $k$. In particular, this means that neighbouring $D_k$ have a significant overlap, but also that each $D_k$ takes sole ownership of a section of the real line.

 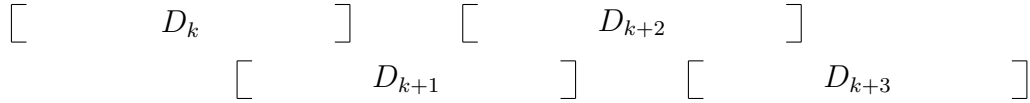
\begin{figure}[ht]

\centering

\begin{tikzpicture} [scale=1]
  
  
  \node at (2.25,1) {$D_k$};
\node at (5.25,0.25) {$D_{k+1}$};
  \node at (8.25,1) {$D_{k+2}$};
  \node at (11.25,0.25) {$D_{k+3}$};

\draw (0,1.25) -- (0,0.75);
\draw (4.5,1.25) -- (4.5,0.75);
\draw (0,1.25) -- (0.2,1.25);
\draw (0,0.75) -- (0.2,0.75);
\draw (4.5,1.25) -- (4.3,1.25);
\draw (4.5,0.75) -- (4.3,0.75);

\draw (3,0.5) -- (3,0);
\draw (7.5,0.5) -- (7.5,0);
\draw (3,0.5) -- (3.2,0.5);
\draw (3,0) -- (3.2,0);
\draw (7.5,0.5) -- (7.3,0.5);
\draw (7.5,0) -- (7.3,0);

\draw (6,1.25) -- (6,0.75);
\draw (10.5,1.25) -- (10.5,0.75);
\draw (6,1.25) -- (6.2,1.25);
\draw (6,0.75) -- (6.2,0.75);
\draw (10.5,1.25) -- (10.3,1.25);
\draw (10.5,0.75) -- (10.3,0.75);

\draw (9,0.5) -- (9,0);
\draw (13.5,0.5) -- (13.5,0);
\draw (9,0.5) -- (9.2,0.5);
\draw (9,0) -- (9.2,0);
\draw (13.5,0.5) -- (13.3,0.5);
\draw (13.5,0) -- (13.3,0);

\end{tikzpicture}

\caption{This diagram illustrates the intersection pattern of the sets $D_k$.}

\end{figure}

We can use this setup to form a convex set by gluing together consecutive $D_k$. In the region where $D_k$ and $D_{k+1}$ overlap, the elements of $D_k$ are slightly more dense (because the common difference of the approximate arithmetic progression is smaller). This ensures that there exist two consecutive elements of $D_k$ in this region, which allows for an application of Lemma \ref{lem:RZ}. Meanwhile, the existence of the non-overlapping region ensures that this glued set contains many elements of $D_k$ for each $k$.

Now we come to the formal details. We use the notation $d_{min}^{(k)}$ for the smallest element of $D_k$ and $d_{max}^{(k)}$ for the largest element of $D_k$. Note that
\begin{align}
    d_{min}^{(k)} & = d_1^{(k)} =  k + c_1(2k+k^2) +c_2(3k+3k^2+k^3), \text{ and} \label{bmin}
    \\  d_{max}^{(k)} & = d_{0.99n}^{(k+1)} = k + c_1\left(2k\frac{99n}{100}+k^2\right) +c_2\left(3\left(\frac{99n}{100} \right)^2k+3\frac{99n}{100}k^2+k^3\right).
    \label{bmax}
\end{align}

Observe that each $D_k$ is contained in the closed interval $[b_{min}^{(k)}, b_{max}^{(k)}]$. We will prove the following two facts about the intersection properties of these intervals.

\begin{Claim} \label{claim1}
    For each $k \in [0.009 n , 0.01 n] $, there exist $1 \leq i,j \leq 0.99n$ such that
    \[
    d_i^{(k+1)} \leq d_j^{(k)} < d_{j+1}^{(k)} \leq d_{i+1}^{(k+1)} .
    \]
\end{Claim}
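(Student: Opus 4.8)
The plan is to prove Claim \ref{claim1} by exploiting the fact, already emphasized in the informal discussion, that in the overlap region $D_k$ behaves like an arithmetic progression of common difference roughly $2c_1k$, while $D_{k+1}$ behaves like an arithmetic progression of common difference roughly $2c_1(k+1) > 2c_1k$. Since the gaps in $D_k$ are strictly smaller than the gaps in $D_{k+1}$ throughout the region where both sets live, any gap $[d_j^{(k)},d_{j+1}^{(k)}]$ of $D_k$ that lies inside the range of $D_{k+1}$ must be contained in some gap $[d_i^{(k+1)},d_{i+1}^{(k+1)}]$ of $D_{k+1}$ — this is just the pigeonhole principle for two increasing sequences, one with uniformly smaller steps than the other. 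So the real content is not the combinatorics but verifying the \emph{quantitative} facts that make this work.

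Concretely, I would proceed as follows. First, using \eqref{bidef}, write down exact expressions for the gap lengths
\[
d_{j+1}^{(k)} - d_j^{(k)} = 2c_1 k + 3c_2 k^2 + 3c_2 k + 6 c_2 k j
\]
and note that over the admissible range $k \in [0.009n, 0.01n]$ and $1 \le j \le 0.99n$, the $c_2$ terms are of order $n^{-5}\cdot n^2 \cdot n = n^{-2}$, hence negligible compared with the main term $2c_1 k = 150 k/n^2$, which is of order $1/n$. I would make this precise with explicit constants: show $d_{j+1}^{(k)} - d_j^{(k)} \le (1+o(1)) \cdot 2c_1 k$ and, crucially, $d_{i+1}^{(k+1)} - d_i^{(k+1)} \ge 2c_1(k+1) > 2c_1 k \ge d_{j+1}^{(k)} - d_j^{(k)}$ — so every gap of $D_{k+1}$ is strictly longer than every gap of $D_k$, even after accounting for the $c_2$ perturbations (the perturbation is $O(n^{-2})$, the margin $2c_1 = 150/n^2$, so for $n$ large the margin wins; this is where the constant $75$ in $c_1$ versus the coefficient $1$ in $a_i = i + \dots$ is doing its job). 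Second, I would check that the range of $D_k$ actually overlaps the range of $D_{k+1}$ in a two-sided way, i.e. that there is at least one index $j$ with $d_j^{(k)}$ and $d_{j+1}^{(k)}$ both lying in the closed interval $[d_{\min}^{(k+1)}, d_{\max}^{(k+1)}]$; this follows from comparing \eqref{bmin} and \eqref{bmax} for consecutive values of $k$ and seeing that the intervals shift by only about $1$ as $k$ increments while each has length about $\tfrac32$, so consecutive intervals overlap in a subinterval of length $\Omega(1)$, which certainly contains two consecutive elements of $D_k$ since those are spaced $O(1/n)$ apart.

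Once both ingredients are in place, the conclusion is immediate: pick $j$ so that $d_j^{(k)}, d_{j+1}^{(k)} \in [d_{\min}^{(k+1)}, d_{\max}^{(k+1)}]$. Since $d_j^{(k)}$ is at least $d_{\min}^{(k+1)} = d_1^{(k+1)}$ and at most $d_{\max}^{(k+1)} = d_{0.99n}^{(k+1)}$, there is a unique $i$ with $d_i^{(k+1)} \le d_j^{(k)} < d_{i+1}^{(k+1)}$, and $1 \le i \le 0.99n - 1$. It then suffices to show $d_{j+1}^{(k)} \le d_{i+1}^{(k+1)}$. But $d_{j+1}^{(k)} = d_j^{(k)} + (d_{j+1}^{(k)} - d_j^{(k)}) < d_{i+1}^{(k+1)} + (d_{j+1}^{(k)} - d_j^{(k)}) - (d_j^{(k)} - d_i^{(k+1)})$, and since $d_{j+1}^{(k)} - d_j^{(k)}$ is strictly less than the $D_{k+1}$-gap $d_{i+2}^{(k+1)} - d_{i+1}^{(k+1)}$ — indeed strictly less than every $D_{k+1}$-gap including $d_{i+1}^{(k+1)} - d_i^{(k+1)}$ — a short chase gives $d_{j+1}^{(k)} < d_{i+1}^{(k+1)}$ unless $j+1$ would overshoot, which it cannot since $d_{j+1}^{(k)}$ still lies below $d_{\max}^{(k+1)}$ by construction; one may need to shrink the overlap window slightly to guarantee $d_{j+1}^{(k)} \le d_{\max}^{(k+1)}$, which costs nothing. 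I expect the main obstacle to be purely bookkeeping: keeping track of the explicit constants (the factors $0.009, 0.01, 0.99$, the constant $75$, and the powers of $n$) carefully enough to certify that the $c_2$-order error terms never swamp the $c_1$-order main terms, and that the overlap interval is genuinely nonempty with room to spare. There is no conceptual difficulty beyond the "slower AP sits inside faster AP's gaps" observation; the work is in the estimates.
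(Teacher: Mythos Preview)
Your approach has a genuine gap. The central claim --- that because every $D_k$-gap is strictly shorter than every $D_{k+1}$-gap, \emph{any} $D_k$-gap lying inside the range of $D_{k+1}$ must sit inside some $D_{k+1}$-gap --- is false. Take for instance $D_k = \{0,1,2,3\}$ and $D_{k+1} = \{-0.5, 0.501, 1.502, 2.503, 3.504\}$: every $D_k$-gap has length $1$, every $D_{k+1}$-gap has length $1.001$, yet \emph{no} $D_k$-gap is contained in any $D_{k+1}$-gap. This is exactly the regime you are in, since the ratio of gap lengths is $(k+1)/k = 1 + O(1/n)$, very close to $1$. Your ``short chase'' in the last paragraph does not repair this: with $\delta = d_j^{(k)} - d_i^{(k+1)}$ you need $\delta + (d_{j+1}^{(k)} - d_j^{(k)}) \le d_{i+1}^{(k+1)} - d_i^{(k+1)}$, i.e.\ $\delta \le \text{gap}_{k+1} - \text{gap}_k \approx 2c_1$, and nothing in your setup bounds $\delta$ below $2c_1$ for an \emph{arbitrary} choice of $j$ in the overlap.

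What is missing is an argument that \emph{some} $j$ works, and this is fundamentally a counting argument. The paper's proof does exactly this: it shows that the overlap interval $I=[d_{\min}^{(k+1)},d_{\max}^{(k)}]$ contains at least two more points of $D_k$ than of $D_{k+1}$, and then pigeonhole forces two consecutive $D_k$-points into a single $D_{k+1}$-gap. Your gap-comparison fact is correct and even useful (it tells you each $D_k$-gap contains at most one $D_{k+1}$-point), but on its own it is not enough; you must still establish that $|D_k \cap I| > |D_{k+1} \cap I|$, and that is where the specific constants $75$, $0.009$, $0.01$, $0.99$ actually bite. Your outline acknowledges the constants matter but never carries out this comparison.
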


\begin{Claim} \label{claim2}
    For each $k \in  [0.009 n , 0.01 n]$, there are at least $Cn$ elements of $D_k$ in the interval $(d_{max}^{(k-1)}, d_{min}^{(k+1)})$, where $C>0$ is an absolute constant.
\end{Claim}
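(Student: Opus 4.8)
\textbf{Proof proposal for Claim \ref{claim2}.} The plan is to work entirely from the explicit formula \eqref{bidef} for $d_i^{(k)}$ and the formulas \eqref{bmin}, \eqref{bmax} for the endpoints, and to reduce the statement to two estimates: first, a lower bound on the length of the interval $(d_{max}^{(k-1)}, d_{min}^{(k+1)})$, and second, an upper bound on the gap $d_{i+1}^{(k)} - d_i^{(k)}$ between consecutive elements of $D_k$ that are relevant here. Dividing the former by the latter gives the claimed count $Cn$.

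For the length of the interval, I would first simplify $d_{min}^{(k+1)} - d_{max}^{(k-1)}$ using \eqref{bmin} and \eqref{bmax}. Writing both quantities out, the leading term is $(k+1)-(k-1) = 2$ from the linear part, and the dominant correction comes from the $c_1$-term: $d_{max}^{(k-1)}$ contains $c_1 \cdot 2(k-1)\frac{99n}{100} \approx \frac{99}{50} c_1 k n$, which with $c_1 = 75/n^2$ is of order $kn \cdot n^{-2} = k/n \approx 0.01$, i.e.\ a small constant, while $d_{min}^{(k+1)}$ contains only $c_1(2(k+1)+(k+1)^2) = O(k^2/n^2) = O(n^{-2}\cdot n^2/10^4)$, negligible. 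The $c_2$-terms are all $O(c_2 n^2 k) = O(n^{-5}\cdot n^2 \cdot n) = O(n^{-2})$, also negligible. So $d_{min}^{(k+1)} - d_{max}^{(k-1)} = 2 - \Theta(1) > 0$, a positive absolute constant; the one thing to check carefully is that the constant $\frac{99}{50}c_1kn \le \frac{99}{50}\cdot 75 \cdot 0.01 < 2$, so the interval is genuinely nonempty, and in fact has length bounded below by an absolute constant (here is where the choice $c_1 = 75/n^2$ and the range $k \in [0.009n, 0.01n]$ matter: $\frac{99}{50}\cdot 75\cdot 0.009 \approx 1.34$, leaving room).

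For the spacing, from \eqref{bidef} one has $d_{i+1}^{(k)} - d_i^{(k)} = 2c_1 k + 3c_2 k^2 + 3 c_2 k + 6 c_2 k i$, and since $i \le 0.99n$, $k \le 0.01 n$, this is at most $2c_1 k + O(c_2 n^2 k) = 2c_1 k + O(n^{-2}) \le \frac{2\cdot 75 \cdot 0.01 n}{n^2} + o(1) = \frac{1.5}{n} + o(1) \ll 1/n$. Hence the number of elements of $D_k$ falling in an interval of (constant) length $\ell$ is at least roughly $\ell \cdot n / 1.5 \gg n$, provided the relevant portion of the interval actually lies inside the range $i \in [1, 0.99n]$ of indices defining $D_k$ — this needs a brief check that $d_{max}^{(k-1)}$ and $d_{min}^{(k+1)}$ both lie strictly between $d_1^{(k)}$ and $d_{0.99n}^{(k)}$, which follows from the same endpoint comparisons (the interval $[d_1^{(k)}, d_{0.99n}^{(k)}]$ has length $\approx \frac{3}{2}$, the interval $(d_{max}^{(k-1)}, d_{min}^{(k+1)})$ has constant length $\approx 0.6$, and they are positioned so that the latter sits well inside the former). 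Taking $C$ to be, say, $1/2$ works for all large $n$.

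The main obstacle is purely bookkeeping: one must track which of the many $c_1$- and $c_2$-terms in \eqref{bmin}, \eqref{bmax}, \eqref{bidef} are genuinely of constant order and which are $o(1)$, and verify the numerical inequality that makes the interval nonempty with room to spare. There is no conceptual difficulty — the estimates are all of the form ``leading term $\pm$ provably smaller terms'' — but the bound $\frac{99}{50}c_1 k n < 2$ is exactly the kind of constant-chasing that the choice $c_1 = 75/n^2$ was engineered to make work, so the arithmetic has to be done honestly.
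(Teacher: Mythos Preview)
Your approach is correct and is a clean alternative to the paper's proof. The paper argues by counting indices: it upper-bounds $|D_k \cap (-\infty, d_{max}^{(k-1)}]|$ (reusing the estimate \eqref{Bk+1Iupper} already obtained in the proof of Claim~\ref{claim1}) and lower-bounds $|D_k \cap (-\infty, d_{min}^{(k+1)})|$, then subtracts to get an explicit constant $C = \tfrac{151}{540}$. Your route---lower-bound the interval length, upper-bound the consecutive spacing in $D_k$, divide---is the more transparent geometric version of the same arithmetic, and avoids the dependence on Claim~\ref{claim1}'s calculations. The paper's approach has the mild advantage of recycling work already done; yours has the advantage of being self-contained and conceptually clearer. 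One small numerical slip: with interval length $\gtrsim 0.5$ and spacing $\lesssim 1.5/n$, you get roughly $n/3$ elements, so $C = \tfrac12$ is a bit too optimistic---but of course any positive constant suffices, and this does not affect the argument.
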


Once we have proved these two claims, the proof will be finished. Indeed, we can use Claim \ref{claim1} together with Lemma \ref{lem:RZ} to glue together consecutive convex sets $D_k$ to form a set 
\[
S \subset\bigcup_{k=0.009 n}^{0.01n} D_k \subset A-A.
\]
Claim \ref{claim2} guarantees that, for each $k \in  [0.009 n , 0.01 n]$, there at least $Cn$ elements in $D_k \cap S$ that do not appear in $D_j \cap S$ for any $j \neq k$. This implies that
\[
|S| \geq (Cn) \cdot (0.001 n ) \gg n^2.
\]

It remains to prove the two claims.

\begin{proof}[Proof of Claim \ref{claim1}]
We will show that the interval $I=[d_{min}^{(k+1)},d_{max}^{(k)}]$ contains at least two more elements of $D_k$ than it does of $D_{k+1}$. It then follows that there must exist two consecutive elements of $D_k$ in this interval, which then implies the existence of the claimed configuration
\[
    d_i^{(k+1)} \leq d_j^{(k)} < d_{j+1}^{(k)} \leq d_{i+1}^{(k+1)} .
 \]

We begin by establishing a lower bound for $|D_k \cap I|$, namely
\begin{align*}
|D_k \cap I| &= | \{ i \in [  0.99n  ] : d_i^{(k)} \geq d_{min}^{(k+1)} \}|
\\& \geq 0.99n - | \{ i \in \mathbb N : d_i^{(k)} <  d_{min}^{(k+1)} \}|.
\end{align*}
We need an upper bound for $| \{ i \in \mathbb N : d_i^{(k)} <  d_{min}^{(k+1)} \}|$. To this end,
\begin{align} \label{Bkeq}
&| \{ i \in \mathbb N : d_i^{(k)} <  d_{min}^{(k+1)} \}|  \nonumber
\\ & \leq 2 + |\{i > 2: d_i^{(k)} < d_{min}^{(k+1)}\}| \nonumber 
\\ & = 2 + | \{ i > 2 : 2c_1ki +c_2(3i^2k+3ik^2) <  1 + c_1(4k+3) +c_2(6k^2+12k+7) \}| 
\\& \leq 2 + | \{ i >2 : 2c_1ki  <  1 + c_1(4k+3)  \}| \nonumber
\\& \leq 2 + \frac{1+c_1(4k+3)}{2c_1k} = \frac{1+c_1(8k+3)}{2c_1k}. \nonumber
\end{align}
Therefore,
\begin{equation} \label{BkIlower}
    |D_k \cap I| \geq  \frac{1.98 n \cdot c_1k - 8c_1k - 1 - 3c_1 }{2c_1k} .
\end{equation}
We will compare the bound in \eqref{BkIlower} with an upper bound for $|D_{k+1} \cap I|$, which we deduce now. Observe that
\begin{align*}
|D_{k+1} \cap I| &= | \{ i \in [  0.99n  ] : d_i^{(k+1)} \leq d_{max}^{(k)} \}|
\\& =  | \{ i \in\mathbb N: 1 + c_1(2(k+1)i+2k+1) +c_2(3i^2(k+1)+3i(k+1)^2+3k^2+3k+1 )
\\ \,\,\, & \leq  c_12k\frac{99n}{100} +c_2\left(3\left(\frac{99n}{100} \right)^2k+3\frac{99n}{100}k^2\right) \} |
\\& \leq \left |  \left \{ i \in \mathbb N : 1 + c_1(2(k+1)i+2k+1) 
\leq  2c_1k\frac{99n}{100} +c_2\left(3\left(\frac{99n}{100} \right)^2k+3\frac{99n}{100}k^2\right) \right \} \right  |.
\end{align*}
Note that the term which involves the multiple of $c_2$ in the previous step is at most $1/n^2$. Therefore, this term is less than $c_1$, which allows us to write
\begin{align}
|D_{k+1} \cap I| &\leq   \left |  \left \{ i \in \mathbb N : 1 + 2c_1(k+1)i+2c_1k
\leq 2 c_1k\frac{99n}{100} \right  \}  \right | \nonumber
\\& =  \left |  \left \{ i \in \mathbb N : i
\leq  \frac{2c_1k\left (\frac{99n}{100}-1 \right ) -1}{2c_1(k+1)} \right  \}  \right | \nonumber
\\ & \leq \frac{2c_1k\left (\frac{99n}{100}-1 \right ) -1}{2c_1(k+1)}. \label{Bk+1Iupper}
\end{align}
It remains to show that the lower bound given in \eqref{BkIlower} is at least as big as the upper bound given in \eqref{Bk+1Iupper}, plus two. That is, we need to show that
\begin{align*} 
    \frac{1.98 n \cdot c_1k - 8c_1k - 1 - 3c_1 }{2c_1k} & \geq 2 + \frac{2c_1k\left (\frac{99n}{100}-1 \right ) -1}{2c_1(k+1)} 
    \\ & =  \frac{4c_1(k+1) + 2c_1k\left (\frac{99n}{100}-1 \right ) -1}{2c_1(k+1)},
\end{align*}
which simplifies to give
\[
 \frac{1.98 n \cdot c_1k - 8c_1k - 1 - 3c_1 }{k} \geq \frac{4c_1(k+1) + 2c_1k\left (\frac{99n}{100}-1 \right ) -1}{k+1},
\]
and eventually
\[
1.98 nc_1k \geq 10c_1k^2+15c_1k+3c_1+1.
\]
Since we have $0.009n \leq k \leq 0.01n$, it would be sufficient to prove that
\[
1.98 n \cdot c_1 \cdot 0.009 n \geq 10c_1 \cdot (0.01n)^2 +15c_1 \cdot (0.01 n) +3c_1+1. 
\]
Substituting in the definition of $c_1$, the previous inequality becomes
\[
\frac{198 \cdot 75 \cdot 9}{100 \cdot 1000} \geq \frac{75}{1000} + \frac{15 \cdot 75}{100 n} + \frac{3 \cdot 75}{n^2} + 1.
\]
This is equivalent to the inequality
\[
0.2615 \geq  \frac{15 \cdot 75}{100 n} + \frac{3 \cdot 75}{n^2},
\]
which is valid for $n$ sufficiently large.

\end{proof}

\begin{proof}[Proof of Claim \ref{claim2}]
We will give an upper bound for $|D_k \cap(-\infty, d_{max}^{(k-1)}]|$ and a lower bound for $|D_k \cap( - \infty, d_{min}^{(k+1)})|$, and combine these bounds to deduce the claimed lower bound for $|D_k \cap ( d_{max}^{(k-1)}, d_{min}^{(k+1)})|$. To upper bound $|D_k \cap(-\infty, d_{max}^{(k-1)}]|$, we make use of \eqref{Bk+1Iupper} and deduce that
\begin{align*}
    |D_k \cap(-\infty, d_{max}^{(k-1)}]| & = |D_k \cap [d_{min}^{(k)},d_{max}^{(k-1)}]|
    \\& \leq \frac{2c_1(k-1)\left (\frac{99n}{100}-1 \right ) -1}{2c_1k}
    \\& \leq \frac{1}{2} \left ( \frac{150}{n^2} \cdot (0.01n) \cdot n -1 \right ) \cdot  \left( \frac{1000}{9n}\cdot  \frac{n^2}{75} \right ) = \frac{10n}{27}.
\end{align*}

Next, we present a lower bound for $|D_k \cap( - \infty, d_{min}^{(k+1)})|= |D_k \cap [d_{min}^{(k)},d_{min}^{(k+1)})|$. Indeed,
\begin{align} \label{nearly}
    |D_k \cap [d_{min}^{(k)},d_{min}^{(k+1)})| & =| \{ i \in [ 0.99n ] : d_i^{(k)} < d_{min}^{(k+1)})\}| \nonumber
    \\& = | \{ i \in  [ 0.99n ]: 2c_1ki +c_2(3i^2k+3ik^2) <  1 + c_1(4k+3) +c_2(6k^2+12k+7) \}| \nonumber
    \\& \geq | \{ i  \in [ 0.99n ] : 2c_1ki +c_2(3i^2k+3ik^2) <  1 + c_1(4k+3) \}| \nonumber
    \\& \geq | \{ i  \in [ 0.99n ] : 2c_1ki  <  0.99 + c_1(4k+3) \}|. 
\end{align}
The last inequality above uses the fact that the term $c_2(3i^2k+3ik^2)$ is at most $0.01$, provided that $n$ is sufficiently large. Therefore,
\begin{align*}
  |D_k \cap [d_{min}^{(k)},d_{min}^{(k+1)})|  &\geq | \{ i  \in \mathbb N: 2c_1ki  <  0.99 + c_1(4k+3) \}|
  \\ & \geq \frac{0.99 + c_1(4k+3)}{2c_1k} -1
    \\ & \geq \frac{0.99}{2c_1k} -1
  \\& \geq \frac{99}{100} \cdot \frac{2n}{3} -1 =\frac{66}{100}n-1 \geq \frac{65}{100}n=\frac{13}{20}n.
\end{align*}
By combining this inequality with \eqref{nearly}, it follows that
\[
|D_k \cap (d_{max}^{(k-1)}, d_{min}^{(k+1)})| \geq \frac{13}{20}n - \frac{10}{27}n = \frac{151}{540}n,
\]
as required.
\end{proof}

\section{Matchings}


\begin{proof}[Proof of Theorem \ref{thm:main2}]
    Again, write $A= \{ a_1<a_2< \dots < a_n \}$ and $d_i=a_{i+1}-a_i$. Since $A$ is convex, we have
    \[ 
    d_1 < d_2 < \dots <d_{n-1}.
    \]
    For convenience, we the shorthand use $k:= \lceil \sqrt n \rceil$. The matching $M$ is given by
    \begin{align*}
    M&= \left \{ (a_{k+1},a_{k+2}), (a_k, a_{k+4}), (a_{k-1}, a_{k+7}), (a_{k-2}, a_{k+11}), \dots ,\left(a_1,a_{k +1+\frac{k(k+1)}{2}}\right ) \right \} 
    \\& = \left \{ \left (a_{k+1-i}, a_{k+1+\frac{i(i+1)}{2}} \right ) : 1 \leq i \leq k \right \}.
    \end{align*}
    The matching $M$ has cardinality $k \geq \sqrt n$, and the choice of parameters ensures that it is indeed a well-defined subset of $A \times A$, provided that $n$ is sufficiently large. To verify this, we just need to check that $k +1+\frac{k(k+1)}{2} \leq n $, which holds comfortably for $n \geq 36$.

    It remains to check that $A-_M A$ is convex. Let 
    \[
    e_i:=a_{k +1+\frac{i(i+1)}{2}} - a_{k+1-i}
    \]
    denote the $i$th element of $A-_M A$.  We need to check that $e_{i+1}-e_{i} > e_{i} - e_{i-1}$ holds for all $2 \leq i \leq k-1$. A telescoping argument gives
    \begin{align} \label{eqed}
    e_i&= (a_{k+2-i} - a_{k+1-i}) + (a_{k+3-i} - a_{k+2-i}) + \dots + \left (a_{k +1+\frac{i(i+1)}{2}} - a_{k +\frac{i(i+1)}{2}} \right ) \nonumber
    \\& = d_{k+1-i}+d_{k+2-i} + \dots + d_{k+\frac{i(i+1)}{2}},
    \end{align}
   and therefore
   \begin{align*}
   e_{i+1} - e_i & = d_{k+1-(i+1)}+d_{k+2-(i+1)} + \dots + d_{k+\frac{(i+1)(i+2)}{2}}
   \\& - \left (d_{k+1-i}+d_{k+2-i} + \dots + d_{k+\frac{i(i+1)}{2}} \right )
   \\& =  d_{k-i}+ d_{k+\frac{i(i+1)}{2}+1}+ d_{k+\frac{i(i+1)}{2}+2} + \dots +d_{k+\frac{(i+1)(i+2)}{2}}.
   \end{align*}
   It then follows that
   \begin{align*}
(e_{i+1} - e_i ) - (e_i -e_{i-1}) &=  d_{k-i}+ d_{k+\frac{i(i+1)}{2}+1}+ d_{k+\frac{i(i+1)}{2}+2} + \dots +d_{k+\frac{(i+1)(i+2)}{2}}
\\ & - \left ( d_{k-i+1}+ d_{k+\frac{i(i-1)}{2}+1}+ d_{k+\frac{i(i+1)}{2}+2} + \dots +d_{k+\frac{i(i+1)}{2}} \right )
   \end{align*}
   There are $i+2$ terms with a positive sign and $i+1$ with a negative sign. We can pair off the $i+1$ largest positive terms with smaller (in absolute value) negative terms to conclude the proof, as follows:
    \begin{align*}
(e_{i+1} - e_i ) - (e_i -e_{i-1}) &=  d_{k-i}+\left (d_{k+\frac{i(i+1)}{2}+1} -d_{k+\frac{i(i+1)}{2}} \right ) + \left (d_{k+\frac{i(i+1)}{2}+2} -d_{k+\frac{i(i+1)}{2}-1} \right ) 
\\& + \dots + \left (d_{k+\frac{(i+1)(i+2)}{2}-1} -d_{k+\frac{i(i-1)}{2}+1} \right ) + \left (d_{k+\frac{(i+1)(i+2)}{2}} -d_{k-i+1}\right )
\\& >0.
   \end{align*}
\end{proof}


\begin{proof}[Proof of Theorem \ref{thm:main3}]

For each $j =1, \dots, n$, define
\[
a_j:=j(2n)^n+(j-1)(2n)^{n-1} + \dots + 2(2n)^{n-(j-2)}+ (2n)^{n-(j-1)}.
\]
   The set $A= \{ a_j : 1 \leq j \leq n \}$ is a convex set. Indeed, the consecutive differences of $A$ are given by
   \[
   d_j= a_{j+1} - a_j = (2n)^n+(2n)^{n-1} +  \dots  + (2n)^{n-j},
   \]
   a sequence which is strictly increasing.

   Let $M \subset A \times A$ be a matching such that $A-_M A$ is convex. Our goal is to prove that $|M| \ll \sqrt n$.

   Let $k \leq n-1$ be an integer. Repeating notation used earlier in the paper, set $d_j^{(k)}:= a_{j+k}-a_j$ and
   \[ 
   D_k= \{ d_j^{(k)} : 1 \leq j \leq n-k \}.
 \]
 We calculate that
 \begin{align*} \label{dcalc}
d_j^{(k)}&=k[(2n)^n+ (2n)^{n-1} + \dots + (2n)^{n-j}] 
\\ &+ (k-1) (2n)^{n-(j+1)} +  (k-2)(2n)^{n-(j+2)} + \dots +(2n)^{n-(j+k-1)}.
 \end{align*}
 An important feature of this construction is that the diameter of the components $D_k$, which is approximately $(2n)^{n-2}$, is significantly smaller than the gaps between consecutive components, which is approximately $(2n)^{n}$. 
 
 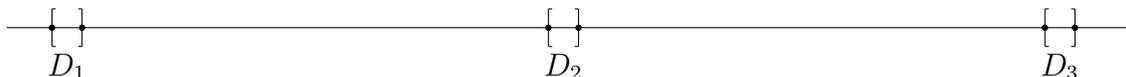
\begin{figure}[ht]

\centering

\begin{tikzpicture} 
  \draw (6,0) -- (21,0);
  \node[below] at (6.8,-0.2) {$D_1$};
   \node[below] at (13.4,-0.2) {$D_2$};
    \node[below] at (20,-0.2) {$D_3$};
  \draw[fill] (6.6,0) circle (1pt);
  \draw[fill] (7,0) circle (1pt);
 \draw[fill] (13.2,0) circle (1pt);
 \draw[fill] (13.6,0) circle (1pt);
  \draw[fill] (19.8,0) circle (1pt);
 \draw[fill] (20.2,0) circle (1pt);
\draw (6.6,0.25) -- (6.6,-0.25);
\draw (7,0.25) -- (7,-0.25);
\draw (6.6,0.25) -- (6.65,0.25);
\draw (6.65,-0.25) -- (6.6,-0.25);
\draw (7,0.25) -- (6.95,0.25);
\draw (6.95,-0.25) -- (7,-0.25);

\draw (13.2,0.25) -- (13.2,-0.25);
\draw (13.6,0.25) -- (13.6,-0.25);
\draw (13.2,0.25) -- (13.25,0.25);
\draw (13.25,-0.25) -- (13.2,-0.25);
\draw (13.6,0.25) -- (13.55,0.25);
\draw (13.6,-0.25) -- (13.55,-0.25);

\draw (19.8,0.25) -- (19.8,-0.25);
\draw (20.2,0.25) -- (20.2,-0.25);
\draw (19.8,0.25) -- (19.85,0.25);
\draw (19.85,-0.25) -- (19.8,-0.25);
\draw (20.2,0.25) -- (20.15,0.25);
\draw (20.2,-0.25) -- (20.15,-0.25);
\end{tikzpicture}

\caption{This diagram illustrates how the gaps between the consecutive $D_i$ are significantly larger than the diameters of the individual $D_i$. This is the heuristic reason why Claims \ref{claim:most21} and \ref{claim:weak} are valid.}

\end{figure}

 This allows us to conclude that, with at most one exception, a convex set can have at most one representative from each $D_k$. This is formalised in the following claim.

 \begin{Claim} \label{claim:most21}
     Suppose that $S \subset A-A$ is convex. Then there exists at most one $k \in \mathbb N$ such that $|S \cap D_k| \geq 2$. Moreover, if $|S \cap D_{k_2}| \geq 2$ then $S \cap D_{k_1}= \emptyset$ for all $ k_1 < k_2$.
 \end{Claim}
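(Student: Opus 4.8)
\textbf{Proof proposal for Claim \ref{claim:most21}.}

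The plan is to exploit the enormous scale separation built into the construction: each block $D_k$ sits inside an interval of diameter roughly $(2n)^{n-2}$, whereas $d_{\min}^{(k+1)} - d_{\max}^{(k)}$ is of order $(2n)^n$, i.e.\ larger by a factor comparable to $(2n)^2$. First I would record precise estimates: from the displayed formula for $d_j^{(k)}$ one sees that $d_1^{(k)} \approx k(2n)^n$ and $d_{n-k}^{(k)} - d_1^{(k)}$ is a sum of lower-order terms bounded by something like $2(2n)^{n-1}$, while also the consecutive gaps within $D_k$, namely $d_{j+1}^{(k)} - d_j^{(k)}$, are each at most about $(2n)^{n-1}$ and at least about $(2n)^{n-2}$ (up to controllable error terms). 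The upshot I want is a clean sandwich: there are constants so that every two elements of the same $D_k$ differ by at most $\tfrac13 (2n)^n$, say, while the smallest element of $D_{k+1}$ exceeds the largest element of $D_k$ by at least $\tfrac23(2n)^n$; hence the blocks $D_1, D_2, \dots$ are ``well separated'' in the sense that any gap between a point of $D_{k_1}$ and a point of $D_{k_2}$ with $k_1 < k_2$ is much larger than any internal gap of any single block.

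With that geometry in hand, suppose for contradiction that $S$ is convex and meets two distinct blocks, say $D_{k_1}$ and $D_{k_2}$ with $k_1 < k_2$, in at least two points each — or more generally that $|S \cap D_{k_2}| \ge 2$ while $S$ also meets some $D_{k_1}$ with $k_1 < k_2$. Write $S = \{s_1 < s_2 < \cdots\}$. Because $S$ is convex, its consecutive differences are strictly increasing. Pick two consecutive elements of $S$ that both lie in the same block $D_{k}$ (for the first statement, $k$ is the block containing $\ge 2$ points of $S$; one must first argue that two of those points are consecutive \emph{in} $S$, which follows since all elements of $S$ outside $D_k$ that are smaller lie far to the left and all larger ones lie far to the right, so the $S\cap D_k$ points form a consecutive run in $S$). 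Those two consecutive elements of $S$ have difference at most the internal diameter bound, $\le \tfrac13(2n)^n$. But somewhere else in $S$ there is a pair of consecutive elements straddling a block boundary — either between $D_{k_1}$ and the next occupied block, or between $D_{k_2}$ and a later occupied block — and that difference is at least the separation bound, $\ge \tfrac23(2n)^n$. If the small-difference pair occurs at a later position in the increasing sequence $S$ than the large-difference pair, this contradicts the strict monotonicity of consecutive differences of a convex set. So the remaining task is purely bookkeeping: show the positions can always be arranged so that the contradiction fires. For the ``moreover'' clause, the pair inside $D_{k_2}$ comes \emph{after} (in the order on $S$) any element of $D_{k_1}$ with $k_1 < k_2$, so the straddling gap between $D_{k_1}$'s block and the next occupied block occurs earlier in $S$ than the tiny internal gap in $D_{k_2}$ — immediate contradiction. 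For the first clause, if two different blocks each contributed $\ge 2$ points, apply the same argument to the \emph{later} of the two blocks.

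The main obstacle I anticipate is not conceptual but the careful extraction of the two quantitative bounds — an upper bound for $\operatorname{diam}(D_k)$ and a lower bound for $d_{\min}^{(k+1)} - d_{\max}^{(k)}$ — that are simultaneously valid for \emph{all} $k \le n-1$ with uniform absolute constants, since the coefficients $k, k-1, \dots$ appearing in $d_j^{(k)}$ grow with $k$ and one must check the lower-order terms never conspire to close the gap (this is exactly the point where the base $2n$, rather than something smaller, is used). Once those two inequalities are nailed down, the rest is the short order-of-elements argument sketched above, and I would present it by letting $S = \{s_1 < \cdots < s_m\}$, locating the index ranges of $S$ lying in each $D_k$, and comparing one internal gap against one boundary gap.
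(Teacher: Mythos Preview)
Your approach is correct and essentially the same as the paper's: exploit the scale separation so that a gap in $S$ crossing between blocks is much larger than any gap internal to a single $D_k$, then use the ordering (the internal gap in $D_{k_2}$ occurs \emph{after} the block-crossing gap) to contradict the monotonicity of consecutive differences. The paper's implementation is slightly more streamlined---rather than speaking of consecutive runs and straddling gaps in general, it simply takes the smallest element $d_j^{(k_2)}$ of $S\cap D_{k_2}$, observes that its predecessor $x$ in $S$ lies in a lower block (so $d_j^{(k_2)}-x>(2n)^n$) while its successor $y$ lies in $D_{k_2}$ (so $y-d_j^{(k_2)}\le(2n)^{n-1}$), and reads off the contradiction $d_j^{(k_2)}-x<y-d_j^{(k_2)}$ at that one point---but this is exactly your argument localised to the boundary of the $D_{k_2}$-run.
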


 \begin{proof}
    The first sentence of the claim follows from the second, and so it is sufficient to prove only the second sentence. Suppose for a contradiction that $k_1<k_2$, $|S \cap D_{k_1}| \geq 1$ and $|S \cap D_{k_2}| \geq 2$. Let $d_j^{k_2}$ be the smallest element of $S \cap D_{k_2} $. Since $S \cap D_{k_1}$ is non-empty, it follows that $d_j^{(k_2)}$ is not the first element of $S$, and since $S$ also contains a larger element of $D_{k_2}$, we also know that $d_j^{(k_2)}$ is not the last element of $S$. Let $x$ be the element of $S$ preceding $d_j^{(k_2)}$ and let $y$ be the element of $S$ following $d_j^{(k_2)}$. By the convexity of $S$,
     \begin{equation} \label{xandy}
     d_j^{(k_2)}-x < y- d_j^{(k_2)}.
     \end{equation}
     On the other hand
     \begin{align} \label{xbound}
     d_j^{(k_2)}-x \geq d_1^{(k_2)} - d_{n-(k_2 -1)}^{(k_2-1)} &> k_2 [(2n)^n+(2n)^{n-1}] - (k_2-1) [ (2n)^n + \dots + (2n) ] \nonumber
     \\& = (2n)^n + (2n)^{n-1} - (k_2-1)[ (2n)^{n-2} + \dots + (2n) ] \nonumber
     \\& >  (2n)^n  +  (2n)^{n-1} - n[ (2n)^{n-2} + \dots + (2n) ] \nonumber
     \\& \geq (2n)^n  +  (2n)^{n-1}  - n \cdot 2 \cdot  (2n)^{n-2} = (2n)^n  .
     \end{align}
     The second inequality uses the fact that $k_2 \leq n$, while the third inequality is an application of the inequality
     \begin{equation} \label{basic}
         (2n)^{j}+(2n)^{j-1}+ \dots + (2n) \leq 2 \cdot (2n)^j,
     \end{equation}
     which is valid for all $j, n \in \mathbb N$.
     
     Meanwhile, since $y \in D_{k_2}$, a similar calculation yields
     \begin{equation} \label{ybound}
     y -  d_j^{(k_2)} \leq d_{n-k_2}^{(k_2)}- d_1^{(k_2)} \leq k_2[ (2n)^{n-2} +  (2n)^{n-3} + \dots + (2n) ] \leq (2n)^{n-1}.
     \end{equation}
      Comparing \eqref{xbound}, \eqref{ybound} and \eqref{xandy}, we observe a contradiction.
 \end{proof}

 Using the same basic fact about the blocks $D_k$ again, namely that the gap between consecutive blocks is significantly larger than their individual diameters, we now show that the blocks which contain elements of a convex set must occur in a weakly convex form. A set $S= \{s_1 < s_2 < \dots < s_n \} \subset \mathbb R$ is said to be \textit{weakly convex} if the inequality
 \[
s_{i+1} - s_i \geq s_i - s_{i-1}
\]
holds for all $i=2,\dots, n-1$. For a given set $S \subset A-A$, we define 
 \[
 K(S)= \{ k \in \mathbb N : |S \cap D_k| \geq 1 \}.
 \]

 \begin{Claim} \label{claim:weak}
Suppose that $S \subset A-A$ is convex. Then the set $K(S)$ is weakly convex.
 \end{Claim}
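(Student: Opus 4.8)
The plan is to push further the localization of the blocks $D_k$ that already underlies Claim~\ref{claim:most21}. Set $\rho := (2n)^n + (2n)^{n-1}$. Writing $d_j^{(k)} = \sum_{i=j}^{j+k-1} d_i$ and recalling $d_i = (2n)^n + (2n)^{n-1} + \bigl((2n)^{n-2} + \dots + (2n)^{n-i}\bigr)$, I would first record the two-sided bound $k\rho \le d \le k\rho + (2n)^{n-1}$ for every $d \in D_k$ and every $1 \le k \le n-1$: the lower bound is immediate since each $d_i \ge (2n)^n + (2n)^{n-1}$, while for the upper bound the basic inequality~\eqref{basic} shows the parenthetical tail of each $d_i$ is at most $2(2n)^{n-2}$, so summing the $k \le n-1$ terms contributes an overshoot below $(2n)^{n-1}$. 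The point of centering the blocks at $k\rho$ rather than $k(2n)^n$ is that the overshoot then becomes genuinely smaller than the block spacing: $2(2n)^{n-1} < \rho$. In particular $\max D_j < \min D_{j'}$ whenever $j < j'$, so the blocks are linearly ordered.

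Next I would set up the combinatorial frame. Write $K(S) = \{k_1 < k_2 < \dots < k_m\}$; the weak convexity condition is vacuous unless $m \ge 3$, so fix $2 \le i \le m-1$. Since $k_i, k_{i+1} > k_1$, Claim~\ref{claim:most21} gives $|S \cap D_{k_i}| = |S \cap D_{k_{i+1}}| = 1$; call these elements $y$ and $z$, and put $x := \max(S \cap D_{k_{i-1}})$ (this set is nonempty as $k_{i-1} \in K(S)$). Because the blocks are linearly ordered, because no $D_j$ with index strictly between two consecutive elements of $K(S)$ meets $S$, and because any element of $S$ lying in $(-\infty,0]$ sits below every $D_j$, one checks that $x$ is the immediate predecessor of $y$ in $S$ and $z$ is the immediate successor; in particular $y$ is neither first nor last in $S$.

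Now convexity of $S$ yields $z - y > y - x$. From the localization, $z - y \le (k_{i+1} - k_i)\rho + (2n)^{n-1}$ and $y - x \ge (k_i - k_{i-1})\rho - (2n)^{n-1}$, so $\bigl((k_{i+1} - k_i) - (k_i - k_{i-1})\bigr)\rho > -2(2n)^{n-1} > -\rho$. As the left-hand side is an integer multiple of $\rho$, it must be $\ge 0$, i.e. $k_{i+1} - k_i \ge k_i - k_{i-1}$, which is precisely weak convexity of $K(S)$.

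The only place that needs genuine care is the middle step — making sure $x, y, z$ really are three consecutive elements of $S$ even when $D_{k_1}$ carries several elements of $S$ or when $S$ contains non-positive elements — but this follows directly from the linear ordering of the blocks together with Claim~\ref{claim:most21}. The estimates in the first step are routine and of the same flavour as those already carried out for Claim~\ref{claim:most21}.
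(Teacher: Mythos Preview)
Your proof is correct and follows essentially the same approach as the paper: both exploit that the diameter of each block $D_k$ is tiny compared to the gap between consecutive blocks, and then use integrality of the $k_i$ to upgrade a strict inequality to the desired weak one. The main difference is cosmetic --- you package the block estimates once as the clean two-sided localization $k\rho \le d \le k\rho + (2n)^{n-1}$ and argue directly, whereas the paper argues by contradiction and re-derives the relevant bounds inline; your version is a little tidier but mathematically equivalent.
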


 \begin{proof} 
Suppose for a contradiction that $K(S)$ is not weakly convex. Then there exists three consecutive elements 
\[
d_{j_1}^{(k_1)}, d_{j_2}^{(k_2)}, d_{j_3}^{(k_3)} \in S
\]
such that $k_2-k_1 > k_3-k_2$. Since the $k_i$ are integers, it follows that
\begin{equation} \label{kstuff}
    2k_2-k_1-k_3 \geq 1.
\end{equation}
The difference between $d_{j_2}^{(k_2)}$ and $d_{j_1}^{(k_1)}$ is
\begin{align*}
d_{j_2}^{(k_2)} - d_{j_1}^{(k_1)}&\geq d_{1}^{(k_2)} - d_{n-k_1}^{(k_1)} 
\\& > k_2[ (2n)^n + (2n)^{n-1}] -k_1[(2n)^n+ (2n)^{n-1} + \dots + (2n)] 
\\& =(k_2-k_1)[ (2n)^n +(2n)^{n-1}]  -k_1[(2n)^{n-2}+ (2n)^{n-3} + \dots + (2n)] 
\end{align*}
Meanwhile, the next difference can be bounded by
\begin{align*}
d_{j_3}^{(k_3)} - d_{j_2}^{(k_2)}&\leq d_{n-k_3}^{(k_3)} - d_{1}^{(k_2)}
\\ &< k_3[(2n)^n+ (2n)^{n-1} + \dots + (2n)] - k_2[(2n)^n+ (2n)^{n-1}]
\\& =(k_3 -k_2)[(2n)^n+ (2n)^{n-1}] + k_3[(2n)^{n-2}+ (2n)^{n-3} + \dots + (2n)].
\end{align*}
However, by the convexity of $S$, we also have $d_{j_2}^{(k_2)} - d_{j_1}^{(k_1)} < d_{j_3}^{(k_3)}-d_{j_2}^{(k_2)}$. Combining this with the previous two inequalities and applying \eqref{kstuff} yields
\[
(k_3+k_1) [ (2n)^{n-2}+ (2n)^{n-3} + \dots + (2n)] > (2k_2 -k_3-k_1) [(2n)^n+ (2n)^{n-1}] \geq [(2n)^n+ (2n)^{n-1}].
\]
We then once again use inequality \eqref{basic} to obtain
\begin{equation} \label{cont}
(k_3+k_1) \cdot 2 \cdot (2n)^{n-2} > (2n)^n.
\end{equation}
Finally, note that $k_3+k_1 < 2n$. This holds because $k_3,k_1 \leq n-1$, as the sets $D_{k_i}$ are only defined within this range. Plugging this into \eqref{cont}, we obtain the contradiction
\[
2 \cdot (2n)^{n-1} > (2n)^n.
\]

\end{proof}

 Another useful feature of this construction is that the consecutive differences within the components $D_k$ shrink rapidly, which makes it difficult to find a large convex sets in $A-A$.
 
 \begin{figure}[ht]

\centering

\begin{tikzpicture} [scale=1.3]
  \draw (0,0) -- (11.2,0);
  \node[below] at (0,0) {$d_1^{(1)}$};
  \node[below] at (10,0) {$d_2^{(1)}$};
  \node[below] at (11,0) {$d_3^{(1)}$};
  \node[above] at (11.1,0) {$d_4^{(1)}$};
  \draw[fill] (0,0) circle (1pt);
  \draw[fill] (10,0) circle (1pt);
 \draw[fill] (11,0) circle (1pt);
 \draw[fill] (11.1,0) circle (1pt);
\end{tikzpicture}

\caption{In this picture, we zoom in to take a closer look at the way the elements of $D_k$ are distributed (here we consider the set $D_1$ with $n=5$). Crucially, the gaps between consecutive elements of $D_k$ shrink rapidly, with the conseutive differences resembling a geometric progression with a small common ratio. This picture can be used for a sketchy justification of Claims \ref{claim:most22} and \ref{claim:ap}}

\end{figure}
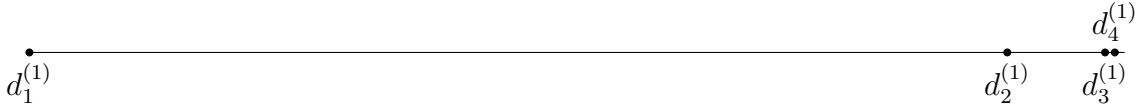

 We use this fact in the following claim to establish that a convex set cannot contain more than two elements from any $D_k$.

 \begin{Claim} \label{claim:most22}
     Suppose that $S \subset A-A$ is convex. Then
     \[
     |S \cap D_k| \leq 2
     \]
     holds for any $k=1,\dots,n-1$.
 \end{Claim}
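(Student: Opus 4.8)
The plan is to argue by contradiction: assume $|S\cap D_k|\ge 3$ and produce a triple of consecutive elements of $S$ violating the increasing-gap condition that defines convexity. The mechanism is that inside a block $D_k$ the consecutive gaps decay geometrically with ratio roughly $1/(2n)$, so the first gap of any window swamps the sum of all the later gaps --- which is exactly what convexity forbids.

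First I would record the relevant computation. Writing $\delta_j^{(k)}:=d_{j+1}^{(k)}-d_j^{(k)}$ for the consecutive differences inside $D_k$ (defined for $1\le j\le n-k-1$), the displayed formula for $d_j^{(k)}$ gives, after cancellation,
\[
\delta_j^{(k)}=(2n)^{n-j-1}+(2n)^{n-j-2}+\dots+(2n)^{n-j-k}=\sum_{m=1}^{k}(2n)^{n-j-m}.
\]
From this I read off the two facts I need: first, $\delta_j^{(k)}\ge (2n)^{n-j-1}$; second, since $\delta_j^{(k)}\le k(2n)^{n-j-1}<n(2n)^{n-j-1}$ and $\sum_{j\ge b}(2n)^{n-j-1}=\tfrac{2n}{2n-1}(2n)^{n-b-1}$,
\[
\sum_{j=b}^{n-k-1}\delta_j^{(k)}<n\sum_{j\ge b}(2n)^{n-j-1}=\frac{2n^2}{2n-1}(2n)^{n-b-1}\le (2n)^{n-b}
\]
for every $1\le b\le n-k-1$, the last step being $2n^2\le 2n(2n-1)$.

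Next I would set up the triple. As in the computation behind \eqref{xbound}, $\min D_{k'}>\max D_{k'-1}$ for every $k'$, so the blocks are totally ordered, $D_1<D_2<\dots<D_{n-1}$ (and every element of $A-A$ below $\min D_1$ is nonpositive); consequently every element of $S$ not lying in $D_k$ lies either below all of $S\cap D_k$ or above all of it. Hence, if $|S\cap D_k|\ge 3$, any three elements $d_a^{(k)}<d_b^{(k)}<d_c^{(k)}$ that are consecutive inside $S\cap D_k$ are in fact consecutive inside $S$, and the convexity of $S$ forces $d_b^{(k)}-d_a^{(k)}<d_c^{(k)}-d_b^{(k)}$. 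Telescoping, the left side equals $\sum_{j=a}^{b-1}\delta_j^{(k)}\ge \delta_{b-1}^{(k)}\ge (2n)^{n-b}$ (here $b\ge 2$, since $a\ge 1$), while the right side equals $\sum_{j=b}^{c-1}\delta_j^{(k)}\le\sum_{j=b}^{n-k-1}\delta_j^{(k)}<(2n)^{n-b}$ by the estimate above. This contradiction proves the claim.

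The only point needing any care is the bookkeeping that certifies the three chosen points of $S\cap D_k$ as consecutive in $S$ (so that the convexity inequality applies with no intervening terms); this uses only the block ordering $D_1<\dots<D_{n-1}$. Everything else is the elementary geometric-series estimate, which has plenty of slack because $k<n$ whereas the decay ratio is only $1/(2n)$.
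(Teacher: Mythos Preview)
Your proof is correct and follows essentially the same approach as the paper's: both argue by contradiction, taking three elements $d_a^{(k)}<d_b^{(k)}<d_c^{(k)}$ of $S\cap D_k$, bounding the first gap from below by $(2n)^{n-b}$ (via $\delta_{b-1}^{(k)}$) and the second from above by the same quantity (via a geometric-series tail), and obtaining the contradiction with convexity. Your write-up is slightly more careful than the paper's in one respect --- you explicitly use the block ordering $D_1<\dots<D_{n-1}$ (and that the nonpositive part of $A-A$ lies below $D_1$) to certify that the three chosen points are consecutive in $S$, whereas the paper simply asserts this --- but the underlying argument is the same.
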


\begin{proof}
  Suppose for a contradiction that there exist three consecutive elements of $S$ belonging to the same block $D_k$. In particular, we have $d_{i_1}^{(k)} < d_{i_2}^{(k)}< d_{i_3}^{(k)} $ satisfying
  \begin{equation} \label{convexbasic}
  d_{i_3}^{(k)} - d_{i_2}^{(k)} > d_{i_2}^{(k)} - d_{i_1}^{(k)}. 
  \end{equation}
  Then, since $i_2 \geq i_1+1$, we have
  \[
  d_{i_2}^{(k)} - d_{i_1}^{(k)} \geq d_{i_2}^{(k)} - d_{i_2-1}^{(k)} = (2n)^{n-i_2} + (2n)^{n-(i_2+1)} + \dots + (2n)^{n-(i_2+k-1)}> (2n)^{n-i_2}.
  \]
  On the other hand,
  \begin{align*}
  d_{i_3}^{(k)} - d_{i_2}^{(k)}  \leq d_{n-k}^{(k)} - d_{i_2}^{(k)} 
 &< k[ (2n)^{n-(i_2+1)} + \dots + (2n) ] 
  \\ & \leq k \cdot 2 \cdot  (2n)^{n-(i_2+1)} 
  \\& < n \cdot 2 \cdot  (2n)^{n-(i_2+1)} = (2n)^{n-i_2}.
  \end{align*}
  Combining the previous two inequalities with \eqref{convexbasic}, we obtain the intended contradiction

\end{proof}


By proving Claims \ref{claim:most21} and \ref{claim:most22}, we have essentially proved that each block of $D_k$ in $A-A$ can contain at most one element of a convex set $S \subset A-A$. We can be a little more precise; taking the potential exceptional block into account, we have the bound
\begin{equation} \label{Sbound}
|S| \leq |K(S)|+1.
\end{equation}
It remains to upper bound the size of the indexing set $K(S)$. We need one more claim to allow us to achieve this goal. Note that the following claim represents the first time in the proof where we use the fact that the convex set $S$ is derived from a matching.

\begin{Claim} \label{claim:ap}
    Suppose that $M \subset A \times A$ is a matching and that $S = A-_M A$ is a convex set. Then the indexing set $K(S)$ does not contain four consecutive elements which form an arithmetic progression.
\end{Claim}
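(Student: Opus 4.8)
The plan is to argue by contradiction, supposing that $K(S)$ contains four consecutive elements forming an arithmetic progression, say $k, k+r, k+2r, k+3r$ with common difference $r \ge 1$. By Claim \ref{claim:most22} each block contributes at most two elements of $S$, and by Claim \ref{claim:most21} at most one block contributes two (and that block is the last one containing any element of $S$); since these four indices are not the largest in $K(S)$ unless the chain ends here, I should be slightly careful, but in any case I may assume that the elements $d_{j_1}^{(k)}, d_{j_2}^{(k+r)}, d_{j_3}^{(k+2r)}, d_{j_4}^{(k+3r)}$ are consecutive in $S$ (or nearly so). The key point is that the gap $d^{(k+(t+1)r)}_{\bullet} - d^{(k+tr)}_{\bullet}$ between representatives of consecutive blocks in this progression is, to leading order, $r[(2n)^n + (2n)^{n-1}]$, with a correction of size at most about $n \cdot (2n)^{n-2}$ coming from the within-block position and the lower-order terms in the formula for $d_j^{(k)}$. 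So the three consecutive differences of $S$ arising here all agree with $r[(2n)^n+(2n)^{n-1}]$ up to an additive error bounded by, say, $2n \cdot (2n)^{n-2} = (2n)^{n-1}$ (using \eqref{basic}).

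Next I would extract the contradiction from convexity. Convexity of $S$ forces the three consecutive differences to be strictly increasing; but if each lies within $(2n)^{n-1}$ of the common value $r[(2n)^n+(2n)^{n-1}]$, then consecutive such differences can differ by at most $2(2n)^{n-1} = (2n)^{n-1}\cdot 2 < (2n)^n$. This rules out nothing by itself — increasing differences can still be very close. This is where the matching hypothesis must enter, and it is the crux of the argument: for a general convex $S$, four equally spaced blocks could perfectly well each contribute one point with increasing gaps. The matching condition restricts \emph{which} representatives $d_j^{(k+tr)}$ can appear: if $(a_{p},a_{q}) \in M$ gives the element of $D_{k}$ (so $q - p = k$), then the indices $p$ and $q$ are used up and cannot reappear in other pairs. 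I would compute the within-block index $j$ more carefully: writing the chosen pair for block $k+tr$ as $(a_{p_t}, a_{p_t + k+tr})$, the value $d_{j}^{(k+tr)}$ depends on $p_t$, and the second-order term in the expansion is governed by $p_t$. The matching forces the $2 \cdot 4 = 8$ indices involved (or as many as are genuinely distinct) to be distinct elements of $\{1,\dots,n\}$.

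The decisive computation, then, is to write the three consecutive differences $e_2 - e_1$, $e_3 - e_2$, $e_4 - e_3$ of these four $S$-elements precisely in terms of $r$, $k$, and the eight matching indices, isolate the $(2n)^{n-2}$-order terms, and show that the strict inequalities $e_2-e_1 < e_3-e_2 < e_4-e_3$ together with the double-counting constraint imposed by the matching are incompatible — essentially because an arithmetic progression in $k$ makes the leading $r[(2n)^n+(2n)^{n-1}]$ terms cancel in the \emph{second} differences $(e_3-e_2)-(e_2-e_1)$ and $(e_4-e_3)-(e_3-e_2)$, so convexity of $S$ becomes a statement purely about the lower-order terms, which are controlled by the matching indices lying in $\{1,\dots,n\}$ and hence bounded, producing the contradiction.

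The main obstacle I anticipate is bookkeeping: correctly expanding $d_j^{(k)}$ to sufficient precision (two or three leading powers of $2n$), tracking how the within-block index relates to the matching index $p_t$, and then verifying that the arithmetic-progression structure of the $k$-values causes exactly the right cancellation so that the residual inequality is violated. A secondary subtlety is handling the at-most-one exceptional block of Claim \ref{claim:most21}: I would note that even if one of the four blocks contributes two points, the two points it contributes are consecutive in $S$ and their difference is at most the diameter of a block, $\le (2n)^{n-1}$, which is absorbed into the same error term, so the argument is unaffected. Once the precise expansion is in hand, the contradiction should follow from comparing terms of order $(2n)^{n-2}$ against an error of order $(2n)^{n-1}$ divided out appropriately — or more likely, the cleanest route is to show the second difference of the $e_i$ is forced to be both positive (by convexity) and bounded above by a quantity that the arithmetic progression makes non-positive.
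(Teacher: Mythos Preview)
Your setup is fine and you correctly isolate the one place where the matching hypothesis enters: it forces the lower indices $j_1,j_2,j_3,j_4$ (where the representative of $D_{k+tr}$ is $d_{j_{t+1}}^{(k+tr)}=a_{j_{t+1}+k+tr}-a_{j_{t+1}}$) to be pairwise distinct. But from that point on your plan has a real gap. You write that after the leading $r[(2n)^n+(2n)^{n-1}]$ cancels in the second differences, convexity becomes a statement about lower-order terms ``controlled by the matching indices lying in $\{1,\dots,n\}$ and hence bounded, producing the contradiction.'' Boundedness does not produce a contradiction: the residual terms can have either sign, and nothing you have said forces the second difference to be non-positive. The step you are deferring as ``bookkeeping'' is in fact the idea of the proof.

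What actually drives the contradiction in the paper is a short bootstrapping on the \emph{ordering} of the $j_i$. Using only $e_1<e_2$ and the expansion of $d_j^{(k)}$, one shows that $j_2>j_1$ is impossible (it would make $e_1$ pick up an extra $(2n)^{n-(j_1+1)}$ that already exceeds all of $e_2$'s lower-order terms), so by distinctness $j_2<j_1$. The identical argument with $e_2<e_3$ gives $j_3<j_2$. Now feed $j_3<j_2<j_1$ back in: with these inequalities one gets a \emph{lower} bound on $e_1$ and an \emph{upper} bound on $e_2$ whose difference is at least $(2n)^{n-(j_3+1)}-(k+t-1)\cdot 2\cdot(2n)^{n-(j_3+2)}>0$, contradicting $e_1<e_2$. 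Note that only three of the four representatives are used; the fourth block is there so that Claim~\ref{claim:most21} guarantees the first three blocks each contribute exactly one element and those three are genuinely consecutive in $S$. Your proposal never arrives at this forcing of the order $j_3<j_2<j_1$, and without it the expansion you describe does not close.
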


\begin{proof}
    Suppose for a contradiction that four consecutive elements of $K(S)$ form an arithmetic progression. It then follows from Claim \ref{claim:most21} that there exist four consecutive elements $d_{j_1}^{(k)}, d_{j_2}^{(k+t)}, d_{j_3}^{(k+2t)}$ and $ d_{j_4}^{(k+3t)}$ in $S$, for some positive integers $k,t$ such that $k+3t \leq n - 1$. Since $S$ is derived from a matching, it must be the case that the $j_i$ are pairwise distinct. Write
    \begin{align*}
        e_1 & = d_{j_2}^{(k+t)} -  d_{j_1}^{(k)}
        \\  e_2 & = d_{j_3}^{(k+2t)} -  d_{j_2}^{(k+t)}
        \\  e_3 & = d_{j_4}^{(k+3t)} -  d_{j_2}^{(k+2t)}.
    \end{align*}
    Since $S$ is convex, we have $e_1 < e_2 < e_3$.

    We will now show that it must be the case that $j_2 < j_1$. Suppose for a contradiction that this is not true, and so $j_2>j_1$. Then
    \[
    e_1=d_{j_2}^{(k+t)} -  d_{j_1}^{(k)}  > t[(2n)^n+\dots + (2n)^{n-j_1}] + (t+1)(2n)^{n-(j_1+1)}.
    \]
    On the other hand
    \begin{align*}
    e_2 = d_{j_3}^{(k+2t)} -  d_{j_2}^{(k+t)} &\leq d_{n-(k+2t)}^{(k+2t)} -  d_{j_2}^{(k+t)}
    \\ & < t[ (2n)^{n} + \dots + (2n)^{n-j_2}] + (k+2t)[(2n)^{n-(j_2+1)}+ \dots + (2n)].
    \end{align*}
    It follows from the previous two bounds that
    \begin{align*}
    e_1 -e_2 &> (2n)^{n-(j_1+1)} - (k+2t)[(2n)^{n-(j_1+2)}+ \dots + (2n)] 
    \\& \geq (2n)^{n-(j_1+1)} - (k+2t) \cdot 2 \cdot  (2n)^{n-(j_1+2)} 
    \\& > (2n)^{n-(j_1+1)} - n \cdot 2 \cdot (2n)^{n-(j_1+2)}  =0
    \end{align*}
    This is a contradiction, and we have thus established that $j_2<j_1$. The exact same argument implies that $j_3<j_2$.

   Now, since $j_2< j_1$, we have
    \[\
    e_1=d_{j_2}^{(k+t)}- d_{j_1}^{(k)} \geq t[(2n)^n+ \dots + (2n)^{n-j_2}] - k[ (2n)^{n-(j_2+1)} + \dots + (2n)],
    \]
 Similarly, since $j_3<j_2$, it follows that
    \[
    e_2 = d_{j_3}^{(k+2t)} -  d_{j_2}^{(k+t)}  \leq t[ (2n)^n + \dots + (2n)^{n-j_3}] + (t-1) [ (2n)^{n-(j_3+1)}+ \dots + (2n)].
    \]
    Combining the previous two inequalities, and again making use of the fact that $j_3<j_2$, we have
    \begin{align*}
    e_1-e_2 &\geq (2n)^{n-(j_3+1)} - (k+t-1)[ (2n)^{n-(j_3+2)}+ \dots + (2n)]
    \\ &\geq  (2n)^{n-(j_3+1)} - (k+t-1) \cdot 2 \cdot  (2n)^{n-(j_3+2)}
    \\& > (2n)^{n-(j_3+1)} - n \cdot 2 \cdot  (2n)^{n-(j_3+2)} =0.
    \end{align*}
    This contradicts the fact that $e_1 <e_2$ and completes the proof of the claim.
\end{proof}

When we combine Claim \ref{claim:weak} and Claim \ref{claim:ap}, we see that the set $K$ is a weakly convex subset of $ \{ 1, \dots , n \}$ which does not contain four consecutive terms in an arithmetic progression. It follows that
\[
|K| \ll \sqrt n.
\]
Combining this with \eqref{Sbound}, the proof is complete.
   
\end{proof}

\section{Concluding remarks; sums instead of differences}

The problems considered in this paper were partly motivated by a potential application to a problem in discrete geometry concerning the minimum number of angles determined by a set of points in the plane in general position. This problem was considered recently in \cite{FKMPPW}, and similar problems can be traced back to the work of Pach and Sharir \cite{PS}. We found that progress on this question could be given by a solution to the following problem: given a convex set $A \subset \mathbb R$ estimate the size of the largest matching on $A$ which gives rise to a convex set in the image set $f(A,A)$, where $f: \mathbb R \times \mathbb R \rightarrow \mathbb R$ is a specific bivariate function whose rather complicated formula is omitted here. Theorems \ref{thm:main2} and \ref{thm:main3} of this paper solve this problem for this simplified case when $f(x,y)=x-y$.

With the potential application to the distinct angles problem in mind, an interesting future research direction could be to generalise the problems considered in this paper by considering an arbitrary $f: \mathbb R \times \mathbb R \rightarrow \mathbb R$ in place of the function $f(x,y)=x-y$. We conclude this paper with some remarks about the most natural case, whereby $f(x,y)=x+y$.

It is interesting to see that we can quite easily obtain an optimal result, giving a significant quantitative improvement to Theorem \ref{thm:main2}, if we consider sums instead of differences, as follows.

\begin{Theorem} \label{thm:main4}
Let $n \in \mathbb N$ and suppose that $A \subset \mathbb R$ is a convex set with cardinality $n$. Then there exists a matching $M \subset A \times A$ such that $|M| \geq \lfloor \frac{n}{2} \rfloor $ and $A+_M A$ is convex.
\end{Theorem}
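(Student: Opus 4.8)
The plan is to use the same telescoping idea as in the proof of Theorem \ref{thm:main2}, but with sums the arithmetic simplifies dramatically because $a_p + a_q = a_{p+1} + a_{q-1} + (d_{q-1} - d_p)$, so shifting the two indices of a pair in opposite directions moves the sum by a difference of two consecutive-difference terms, and these differences are themselves increasing by convexity. Concretely, write $A = \{a_1 < \dots < a_n\}$ and $d_i = a_{i+1} - a_i$, so $d_1 < d_2 < \dots < d_{n-1}$. Set $m = \lfloor n/2 \rfloor$ and define the matching
\[
M = \left\{ (a_i, a_{2m+1-i}) : 1 \le i \le m \right\},
\]
which pairs $a_1$ with $a_{2m}$, $a_2$ with $a_{2m-1}$, and so on; these $m$ pairs are clearly pairwise disjoint and all indices lie in $\{1,\dots,n\}$.

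Next I would compute the elements of $A +_M A$. Let $s_i := a_i + a_{2m+1-i}$ be the sum coming from the $i$th pair; since $a_i$ is increasing and $a_{2m+1-i}$ is decreasing in $i$, one checks that $s_1 < s_2 < \dots < s_m$ (indeed $s_{i+1} - s_i = a_{i+1} - a_i - (a_{2m+1-i} - a_{2m-i}) = d_i - d_{2m-i}$, which is negative... ) — so in fact the ordering needs care. The clean way is to order the pairs so that the sums are increasing: note $s_{i+1} - s_i = d_i - d_{2m-i}$, and for $1 \le i \le m-1$ we have $i < 2m - i$, hence $d_i < d_{2m-i}$ and $s_{i+1} < s_i$. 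Thus the sums are \emph{decreasing} in $i$, so the sorted version of $A +_M A$ is $s_m < s_{m-1} < \dots < s_1$. Writing $e_j := s_{m+1-j}$ for the $j$th smallest element, we have
\[
e_{j+1} - e_j = s_{m-j} - s_{m-j+1} = d_{2m-(m-j)} - d_{m-j} = d_{m+j} - d_{m-j}.
\]
Then
\[
(e_{j+1} - e_j) - (e_j - e_{j-1}) = (d_{m+j} - d_{m-j}) - (d_{m+j-1} - d_{m-j+1}) = (d_{m+j} - d_{m+j-1}) + (d_{m-j+1} - d_{m-j}) > 0,
\]
using $d_{m+j} > d_{m+j-1}$ and $d_{m-j+1} > d_{m-j}$, both of which hold by convexity of $A$ (valid for the relevant range $2 \le j \le m-1$, where all indices $m-j, m+j$ lie in $\{1,\dots,n-1\}$). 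Hence $A +_M A$ is convex, and $|M| = m = \lfloor n/2 \rfloor$, completing the proof.

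I do not expect a serious obstacle here: the only point requiring attention is bookkeeping of indices — verifying that every subscript $d_\ell$ that appears satisfies $1 \le \ell \le n-1$ so that the convexity inequalities $d_{\ell+1} > d_\ell$ are all legitimate, and getting the direction of monotonicity of the $s_i$ right. Once the matching is chosen as the "reflection" pairing $a_i \leftrightarrow a_{2m+1-i}$, the telescoping collapses each second difference of $A+_M A$ into a sum of two genuine consecutive-difference gaps of $A$, which is manifestly positive; this is why the sum case is so much cleaner than the difference case and yields the optimal constant $\lfloor n/2 \rfloor$ rather than $\sqrt n$.
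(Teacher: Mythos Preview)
Your argument is correct, but you chose a different matching from the paper. The paper pairs $a_k$ with $a_{n/2+k}$ (a ``shift'' pairing), whereas you pair $a_i$ with $a_{2m+1-i}$ (a ``reflection'' pairing). With the paper's choice the sums $s_k = a_k + a_{m+k}$ are already increasing, with $s_{k+1}-s_k = d_k + d_{m+k}$, so the second difference is $(d_k - d_{k-1}) + (d_{m+k} - d_{m+k-1}) > 0$ immediately, and no reordering step is needed. Your reflection pairing produces sums that \emph{decrease} in $i$ (as you correctly caught mid-argument), and after relabelling you get $e_{j+1}-e_j = d_{m+j} - d_{m-j}$, whose second difference is again a sum of two positive gaps. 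Both routes exploit the same underlying reason the sum case is cleaner than the difference case: the consecutive gap of $A+_M A$ decomposes as a combination of two $d$-increments that are both positive by convexity, whereas for $A-_M A$ one of them enters with the wrong sign. The paper's shift matching is marginally tidier (no sign flip, no reordering), but your version is equally valid and the index bookkeeping you flagged checks out.
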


\begin{proof}
 Suppose that $n$ is even and consider the matching
 \[
 M=\{(a_1, a_{n/2+1}), (a_2, a_{n/2+2}),..., (a_{n/2},a_n) \}.
 \]
 Then the set
 \[
 A+_M A= \{ a_{n/2+k}+a_k : k \in \{1, \dots , n/2 \} \}
 \]
 is convex. If $n$ is odd then we omit $a_n$ and use the same argument as above.
\end{proof}

In particular, it follows from Theorem \ref{thm:main4} that an analogue of the construction in the proof of Theorem \ref{thm:main3} is not possible if we take sums instead of differences. There are other cases in which problems concerning additive properties of convex sets are sensitive to  sums and differences. For instance, a construction in \cite{RNW} (see also \cite{Sch}) shows that there exists a convex set $A \subset \mathbb R$ and $x \in A-A$ with
\[
| \{ (a,b) \in A : a-b=x \}| \gg |A|.
\]
On the other hand, incidence geometry can be used to give the upper bound
\[
| \{ (a,b) \in A : a+b=x \}| \ll |A|^{2/3}
\]
for any convex $A \subset \mathbb R$ and $x \in A+A$.

We would also be interested to know whether Theorem \ref{thm:main} is still valid when $A-A$ is replaced by $A+A$. We were unable to prove anything non-trivial for this question.

\section*{Acknowledgements}

Krishnendu Bhowmick and Oliver Roche-Newton were supported by the Austrian Science Fund FWF Project P 34180. 
Ben Lund was supported by the Institute for Basic Science (IBS-R029-C1).
Part of this work was carried out at Vietnam Institute for Advanced Study in Mathematics (VIASM) in Hanoi, and we thank VIASM for their hospitality and the excellent working conditions.
We also thank Eyvindur Ari Palsson, Steven Senger and Audie Warren for helpful discussions.

\end{document}